\newtheorem{theorem}{Theorem}[section]
\newtheorem{lemma}[theorem]{Lemma}
\newtheorem{proposition}[theorem]{Proposition}
\newtheorem{definition}[theorem]{Definition}
\newtheorem{example}[theorem]{Example}
\newtheorem{remark}[theorem]{Remark}
\numberwithin{equation}{section}
\begin{document}

\title{On the Depth of Generalized Binomial Edge Ideals}
\author{
Anuvinda J, Ranjana Mehta, \and Kamalesh Saha 
}
\date{}

%Author1
\address{\small \rm Department of Mathematics,
SRM University - AP, Amaravati, Andhra Pradesh 522502, India.}
\email{anuvinda\_j@srmap.edu.in} 

%Author2
\address{\small \rm Department of Mathematics,
SRM University - AP, Amaravati, Andhra Pradesh 522502, India.}
\email{ranjana.m@srmap.edu.in}

%Author3
\address{\small \rm Chennai Mathematical Institute, Siruseri, Chennai, Tamil Nadu 603103, India}
\email{ksaha@cmi.ac.in; kamalesh.saha44@gmail.com}
%\thanks{Kamalesh Saha is funded by NBHM Post-Doctoral Fellowship, sponsored by the National Board of Higher Mathematics, Government of India.}

\date{}

\subjclass[2020]{Primary 13C15, 05E40, 13F65, 05C69}

\keywords{Generalized binomial edge ideals, depth, vertex-connectivity, $d$-compatible maps, accessible graphs}

\allowdisplaybreaks

\begin{abstract}
This research focuses on analyzing the depth of generalized binomial edge ideals. We extend the notion of $d$-compatible map for the pairs of a complete graph and an arbitrary graph, and using it, we give a combinatorial lower bound for the depth of generalized binomial edge ideals. Subsequently, we determine an upper bound for the depth of generalized binomial edge ideals in terms of the vertex-connectivity of graphs. We demonstrate that the difference between the upper and lower bounds can be arbitrarily large, even in cases when one of the bounds is sharp. In addition, we calculate the depth of generalized binomial edge ideals of certain classes of graphs, including cyclic graphs and graphs with Cohen-Macaulay binomial edge ideals.
\end{abstract}
\maketitle

\section{Introduction} 
Let $ m,n $ be two positive integers and $K $ be a field and $S=K[x_{ij}: i \in [m]\  and\  j \in [n]]$ be a polynomial ring in $mn$ variables, where $ [m]=\{1,2,..,m\} $ and $ [n]=\{1,2,..n\} $. The determinantal ideals are the ideals generated by all the $ t $-minors of a generic matrix $X=(x_{ij})$ and represented as $I_{t}(X)$. The determinantal ideals have been extensively studied due to their significance in representation theory, combinatorics, and invariant theory (see \cite{bc03}). Subsequently, people started studying the ideals generated by some arbitrary set of minors of a generic matrix owing to their prevalence in various fields.
\medskip

 The binomial edge ideals of graphs are an extension of the determinantal ideals $I_{2}(X)$, which are associated with a $2\times n$ generic matrix $X$. The introduction of this class of ideals was done independently in two separate works, namely \cite{hhhrkara} and \cite{ohtani}. Subsequently, in \cite{rauh13}, J. Rauh introduced the generalized binomial edge ideals, which represent a generalization of binomial edge ideals. The binomial edge ideals and generalized binomial edge ideals are important in algebraic statistics, particularly in examining conditional independence ideals. In \cite{ehhtq14}, Ene et al. introduced the binomial edge ideal of a pair of graphs, which serves as an extension of generalized binomial edge ideals. Significant advancements have been achieved so far in the study of binomial edge ideals. However, the algebraic invariants related to generalized binomial edge ideals and binomial edge ideals of pairs of graphs are relatively unexplored.
 \medskip
 
  Let $ G_{1} $ and $ G_{2} $ be two simple graphs on $ [m] $ and $ [n] $ respectively. Suppose $ e=\{i,j\} \in E(G_{1}) $ and $ f=\{k,l\} \in E(G_{2}) $ with $ i<j $ and $ k<l $. Assign a 2-minor $ p_{(e,f)}=[i\ j | k\ l]=x_{ik}x_{jl}-x_{il}x_{jk} $ to the pair $ (e,f) $. The binomial edge ideal of the pair $ (G_{1},G_{2}) $ is defined to be $$\mathcal{J}_{G_{1},G_{2}}=(p_{(e,f)} | e \in E(G_{1}), f \in E(G_{2}) ) $$ in $ S $. Let $ K_{m} $ denote the complete graph on $ m $ vertices. Then $\mathcal{J}_{K_{m},G} $ is the generalized binomial edge ideal associated with $ G $. It should be noted that in the case of $m=2$, $\mathcal{J}_{K_{2},G} =J_{G}$ is the classical `binomial edge ideal' of $ G $. In comparison to the binomial edge ideals of pairs of graphs, the generalized binomial edge ideals exhibit more favourable algebraic properties (see \cite{ehhtq14}, \cite{saeedi2013binomial}).
  \medskip
  
Depth is one of the most significant homological invariants associated with a module. In contrast to other homological invariants like Castelnuovo-Mumford regularity, the depth of the quotient rings by binomial edge ideals is less explored. For a finitely generated graded $S$-module $M$, the depth is defined as follows:
$$\mathrm{depth}(M)=\mathrm{min}\{i \mid H_{\mathfrak{m}}^{i}(M) \neq 0\},$$ 
where $H_{m}^{i}(M)$ denotes the $i^{th}$ local cohomology module of $M$ with respect to the homogeneous maximal ideal $ \mathfrak{m}=(x_{11},...,x_{mn})$ of $S$.
\medskip

The depth of generalized binomial edge ideals is only known for a very restricted set of classes (see to \cite{ci20}, \cite{ehhtq14}, \cite{zhu23partite}).  Even, there is no specific combinatorial upper or lower bound for the depth of these ideals. Whereas for binomial edge ideals, there are still some exact formulas and bounds on the depth. For the binomial as well as generalized binomial edge ideals of complete graphs, the depth is well-known as they are determinantal ideals. In \cite{banlui}, the authors established a combinatorial upper bound for the depth of binomial edge ideals in terms of vertex-connectivity of the graph. In fact, they showed that if $G$ is a connected non-complete graph, then $\mathrm{depth}(S/J_{G})\leq 2+n-\kappa(G)$, where $\kappa(G)$ is the vertex-connectivity of the graph $G$. Again, In \cite{rouzbahani2022depth}, the notion of $d$-compatible map has been introduced, and a combinatorial lower bound of $\mathrm{depth}(S/J_{G})$ had been established using that map. Specifically, they proved that for any simple graph $G$, $\mathrm{depth}(S/J_{G})\geq f(G)+d(G)$, where $ f(G) $ is the number of free vertices of $G$ and $d(G)$ denotes the sum of diameters of connected components of $G$ and the number of isolated vertices of $G$. In this paper, we give a combinatorial upper bound of the depth of generalized binomial edge ideals, which is a generalization of the upper bound given in \cite{banlui} for the depth of binomial edge ideals. Also, we extend the notion of $d$-compatible map, and using it, we establish a combinatorial lower bound of $\mathrm{depth}(S/\mathcal{J}_{K_m,G})$, which generalizes the lower bound of $\mathrm{depth}(S/J_G)$ given in \cite{rouzbahani2022depth}. Moreover, using our bounds and other techniques, we explicitly calculate the depth of generalized binomial edge ideals (independent of the characteristic of the base field $K$) of several classes of graphs, including cyclic graphs and graphs with Cohen-Macaulay binomial edge ideals. Additionally, we study to what extent the depth of generalized binomial edge ideals can be smaller than the provided upper bound and to what extent it can be larger than the given lower bound. The paper is organised in the following manner.
\medskip

In Section \ref{sec2}, we discuss the necessary prerequisites for the need of subsequent sections. For a graph $G$, we give a combinatorial lower bound of $\mathrm{depth}(S/\mathcal{J}_{K_m,G})$ in Section \ref{sec3}. In \cite{rouzbahani2022depth}, the authors defined the notion of $d$-compatible map, which is a map from $\mathcal{G}$ to $\mathbb{N}_{0}$, where $\mathcal{G}$ denotes the set of all simple graphs and $\mathbb{N}_{0}$ denotes the set of non-negative integers. In this section, We first extend the concept of $d$-compatible map from $\mathcal{K\times G}$ to $\mathbb{N}_{0}$, where $\mathcal{K}$ denotes the set of all complete graphs. By showing the existence of a $d$-compatible map, we give a combinatorial lower bound of $\mathrm{depth}(S/\mathcal{J}_{K_m,G})$ as follows:

\begin{theorem}[{Theorem \ref{thmd-comp}, \ref{thmlb}}]
    Let $G$ be a graph and $\Psi$ be a $d$-compatible map. Then $ \mathrm{depth}(S/\mathcal{J}_{K_{m},G}) \geq \Psi(K_{m},G)$. Moreover, $\Psi(K_m,G)=(m-2)t+f(G)+d(G)$ is a $d$-compatible map, where $t$ denotes the number of connected components of $G$. Hence, $\mathrm{depth}(S/\mathcal{J}_{K_{m},G}) \geq (m-2)t+f(G)+d(G)$.
\end{theorem}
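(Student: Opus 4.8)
This combined statement rests on two separate results, and the plan is to prove each in turn: first, that every $d$-compatible map $\Psi$ satisfies $\mathrm{depth}(S/\mathcal{J}_{K_m,G})\ge\Psi(K_m,G)$; second, that the explicit function $\Psi(K_m,G)=(m-2)t+f(G)+d(G)$ meets the defining conditions of a $d$-compatible map. Together these yield the last inequality.

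For the first result I would induct on graphs ordered lexicographically by the pair $\bigl(|V(G)|,\ \binom{|V(G)|}{2}-|E(G)|\bigr)$. Since $S/\mathcal{J}_{K_m,G}$ is a tensor product over $K$ of the algebras attached to the connected components of $G$, its depth is the sum of those depths; as a $d$-compatible map respects connected components (one of its defining conditions) and assigns isolated vertices the correct value, it suffices to treat connected $G$. If $G=K_n$ is complete, then $\mathcal{J}_{K_m,K_n}=I_2(X)$ for a generic $m\times n$ matrix $X$, so $S/\mathcal{J}_{K_m,K_n}$ is Cohen-Macaulay of dimension $m+n-1$, and the complete-graph condition gives $\Psi(K_m,K_n)\le m+n-1=\mathrm{depth}(S/\mathcal{J}_{K_m,K_n})$. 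If $G$ is connected but not complete, it has a non-simplicial vertex $v$ (for instance an interior vertex of a shortest path joining two non-adjacent vertices); let $G_v$ be $G$ with $N_G(v)$ completed to a clique, write $\mathfrak{m}_v=(x_{1v},\dots,x_{mv})$ and $S_v=S/\mathfrak{m}_v$, and invoke the Ohtani-type decomposition $\mathcal{J}_{K_m,G}=\mathcal{J}_{K_m,G_v}\cap(\mathcal{J}_{K_m,G\setminus v}+\mathfrak{m}_v)$, whose validity for general $m$ is recorded in Section~\ref{sec2}. This yields a short exact sequence $0\to S/\mathcal{J}_{K_m,G}\to (S/\mathcal{J}_{K_m,G_v})\oplus(S_v/\mathcal{J}_{K_m,G\setminus v})\to S_v/\mathcal{J}_{K_m,(G_v)\setminus v}\to 0$, and the Depth Lemma bounds $\mathrm{depth}(S/\mathcal{J}_{K_m,G})$ below by the minimum of $\mathrm{depth}(S/\mathcal{J}_{K_m,G_v})$, $\mathrm{depth}(S_v/\mathcal{J}_{K_m,G\setminus v})$ and $\mathrm{depth}(S_v/\mathcal{J}_{K_m,(G_v)\setminus v})+1$. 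Each of $G_v$, $G\setminus v$, $(G_v)\setminus v$ is strictly earlier in the chosen order, so the induction hypothesis bounds these three depths below by $\Psi(K_m,G_v)$, $\Psi(K_m,G\setminus v)$, $\Psi(K_m,(G_v)\setminus v)$ respectively, and the inductive condition in the definition of a $d$-compatible map is exactly the inequality needed to conclude $\mathrm{depth}(S/\mathcal{J}_{K_m,G})\ge\Psi(K_m,G)$.

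For the second result I would verify each defining condition for $\Psi(K_m,G)=(m-2)t+f(G)+d(G)$. That $\Psi$ respects connected components and treats isolated vertices correctly is immediate, since $t$, $f$ and $d$ are each additive in the relevant sense and an isolated vertex contributes $1$ to each of them, hence $m$ altogether, matching $\mathrm{depth}(S)=mn$. The complete-graph condition is the identity $\Psi(K_m,K_n)=(m-2)+n+1=m+n-1$. The substantive point is the inductive condition: for every connected non-complete $G$ one must exhibit a non-simplicial vertex $v$ with $(m-2)t(G)+f(G)+d(G)$ at most each of $\Psi(K_m,G_v)$, $\Psi(K_m,G\setminus v)$ and $\Psi(K_m,(G_v)\setminus v)+1$. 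I would organise this by whether $G$ has a cut vertex, using that completing $N_G(v)$ leaves $t$ unchanged and, for a suitable choice of $v$, neither decreases $f$ nor shrinks the diameter; that deleting a vertex never decreases $t$; and that choosing $v$ on a diametral path, or adjacent to a free vertex, prevents the diameter of every component from dropping by more than one, which the $+1$ in the third term absorbs. The new feature relative to the $m=2$ case of \cite{rouzbahani2022depth} is the bookkeeping of the term $(m-2)t$: deleting $v$ may split its component into $k$ pieces and raise this term by $(m-2)(k-1)\ge 0$, which is favourable for the two deletion inequalities but must be checked not to spoil the inequality against $\Psi(K_m,G_v)$, where $t$ is unchanged.

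I expect the main obstacle to be the inductive condition in the second result. The diameter is highly unstable under both vertex deletion and neighborhood completion, so producing, in every graph, a single vertex $v$ satisfying all three inequalities simultaneously demands a careful case analysis, and carrying the extra constant $(m-2)t$ through every case is precisely what goes beyond \cite{rouzbahani2022depth}. By contrast, the homological argument in the first result is a routine induction once the generalized Ohtani decomposition of Section~\ref{sec2} is in hand.
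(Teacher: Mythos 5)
Your first half (the bound $\mathrm{depth}(S/\mathcal{J}_{K_m,G})\geq \Psi(K_m,G)$ for any $d$-compatible $\Psi$) follows essentially the paper's route: the intersection decomposition of Theorem \ref{thrm:2}, the resulting short exact sequence, the Depth Lemma, and induction (your lexicographic order on $(|V(G)|,\binom{|V(G)|}{2}-|E(G)|)$ works just as well as the paper's induction on $\mathrm{iv}(G)$ via Lemma \ref{lem:2}). But you misread Definition \ref{def:dcomp} in two places. There is no defining condition that a $d$-compatible map respects connected components, so your reduction ``it suffices to treat connected $G$'' is unjustified: it would need $\Psi(K_m,G)\leq \sum_i\Psi(K_m,G_i)$, which is not part of the definition. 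The reduction is also unnecessary -- when $G$ is a disjoint union of complete graphs you should use condition (1) directly together with the tensor-product decomposition and Theorem \ref{thrm:1}, and otherwise the inductive step works verbatim for disconnected $G$. Likewise, in the inductive step you must take $v$ to be the particular non-free vertex whose existence condition (2) asserts, not an arbitrary non-free vertex such as ``an interior vertex of a shortest path''; the three inequalities are guaranteed only for that vertex. Both slips are repairable without changing the structure of your argument.

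The genuine gap is in the second half, the verification that $\Psi(K_m,G)=(m-2)t+f(G)+d(G)$ is $d$-compatible, which you yourself identify as the substantive point but do not carry out: you only outline a case analysis and concede it ``demands a careful case analysis.'' Worse, the mechanism you sketch is false as stated: one cannot in general choose a non-free $v$ so that completing $\mathcal{N}_G(v)$ ``neither decreases $f$ nor shrinks the diameter'' -- for the path on three vertices the unique non-free vertex is the middle one and $G_v$ is a triangle, so the diameter drops from $2$ to $1$. The correct accounting (and the paper's) is different: for the specific vertex $v$ produced in the proof of \cite[Theorem 3.4]{rouzbahani2022depth} one has $f(G-v)+d(G-v)\geq f(G)+d(G)$, while for any non-free $v$ one has $d(G_v)\geq d(G)-1$ and $d(G_v-v)\geq d(G)-1$, and by Lemma \ref{lem:2} (with $\mathrm{iv}(G)+f(G)=n$) also $f(G_v)\geq f(G)+1$ and $f(G_v-v)\geq f(G)$; thus a possible diameter drop of $1$ under completion is absorbed by the gain of at least one free vertex, not avoided. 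The genuinely new term $(m-2)t$ is then easy to track: $G-v$ has $t'\geq t$ components and $m\geq 2$ gives $(m-2)t'\geq (m-2)t$, while $G_v$ and $G_v-v$ have exactly $t$ components because $v$ is non-free. Without importing these inequalities from \cite{rouzbahani2022depth} (or reproving them), your proposal leaves the key combinatorial step -- the existence of a single $v$ satisfying all three conditions of Definition \ref{def:dcomp}(2) -- unestablished.
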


\noindent In Section \ref{sec4}, we give a combinatorial upper bound of $\mathrm{depth}(S/\mathcal{J}_{K_{m},G})$ in terms of the vertex-connectivity of the graph $G$, the number of vertices of $G$ and $m$. Using the lower bound of the cohomological dimension of generalized binomial edge ideals given in \cite{katsabekis2022arithmetical}, we first establish the upper bound of $\mathrm{depth}(S/\mathcal{J}_{K_{m},G})$ in characteristic $p$ set up, and then, using the characteristic $p$ reduction, we prove the upper bound when $K$ is a field of characteristic zero. In particular, we prove the following:

\begin{theorem}[{Theorem \ref{thmupperbound}}]
   Let $G$ be a non-complete connected graph on $n$ vertices. If the vertex-connectivity of $G$ is $\kappa(G)$, then 
$\mathrm{depth} (S/\mathcal{J}_{K_{m},G})\leq m+n-\kappa(G)$.
\end{theorem}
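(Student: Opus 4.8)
The plan is to convert the depth bound into a lower bound for projective dimension and to obtain that from a lower bound for the local cohomological dimension of $\mathcal{J}_{K_m,G}$. Since $S$ is a polynomial ring in $mn$ variables, the Auslander--Buchsbaum formula gives $\mathrm{depth}(S/\mathcal{J}_{K_m,G}) = mn - \mathrm{pd}_S(S/\mathcal{J}_{K_m,G})$, so the asserted inequality is equivalent to
$$\mathrm{pd}_S(S/\mathcal{J}_{K_m,G}) \;\geq\; mn - m - n + \kappa(G).$$
Writing $\mathrm{cd}(S,I) = \max\{\, i : H^{i}_{I}(S) \neq 0 \,\}$ for the cohomological dimension, it therefore suffices to control $\mathrm{pd}$ from below by $\mathrm{cd}$, and then to bound $\mathrm{cd}(S,\mathcal{J}_{K_m,G})$ from below.

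Assume first that $\mathrm{char}(K) = p > 0$. Here a theorem of Peskine--Szpiro applies: for an ideal $I$ in a regular ring of positive characteristic one has $H^{i}_{I}(S) = 0$ for every $i > \mathrm{pd}_S(S/I)$, that is, $\mathrm{cd}(S,I) \leq \mathrm{pd}_S(S/I)$. Consequently it is enough to prove $\mathrm{cd}(S,\mathcal{J}_{K_m,G}) \geq mn - m - n + \kappa(G)$, and this is precisely the lower bound for the cohomological dimension of generalized binomial edge ideals established in \cite{katsabekis2022arithmetical}: choosing a minimum vertex cut $T$ of $G$ (so that $|T| = \kappa(G)$ and $G\setminus T$ is disconnected), the Frobenius action on local cohomology is used to produce a nonzero class in $H^{i}_{\mathcal{J}_{K_m,G}}(S)$ for $i = mn - m - n + \kappa(G)$. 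Combining the two facts proves the theorem whenever $K$ has positive characteristic.

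It remains to treat $\mathrm{char}(K) = 0$; one cannot argue directly, because the inequality $\mathrm{cd} \leq \mathrm{pd}$ can fail in characteristic zero, so I would reduce modulo $p$. The binomials generating $\mathcal{J}_{K_m,G}$ have coefficients in $\mathbb{Z}$, so $\mathcal{J}_{K_m,G}$ is defined over $\mathbb{Z}$, and since the graded Betti numbers (hence depth) are unchanged under extension of the base field, it is enough to treat $K = \mathbb{Q}$. By a standard generic flatness / spreading-out argument together with upper semicontinuity of Betti numbers, $\mathrm{pd}_{\mathbb{F}_p}(S/\mathcal{J}_{K_m,G}) = \mathrm{pd}_{\mathbb{Q}}(S/\mathcal{J}_{K_m,G})$ for all but finitely many primes $p$; fixing such a $p$ (also large enough for the positive-characteristic argument above to apply) and invoking the case already settled over $\mathbb{F}_p$ gives $\mathrm{pd}_{\mathbb{Q}}(S/\mathcal{J}_{K_m,G}) = \mathrm{pd}_{\mathbb{F}_p}(S/\mathcal{J}_{K_m,G}) \geq mn - m - n + \kappa(G)$, i.e. $\mathrm{depth}(S/\mathcal{J}_{K_m,G}) \leq m + n - \kappa(G)$ in characteristic zero as well.

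I expect the main obstacle to be the positive-characteristic estimate $\mathrm{cd}(S,\mathcal{J}_{K_m,G}) \geq mn - m - n + \kappa(G)$: one must locate, inside the variety of $\mathcal{J}_{K_m,G}$, the combinatorial sub-structure attached to a minimum vertex cut of $G$ whose top local cohomology does not vanish, and extract the exact numerology $mn-m-n+\kappa(G)$ — this is exactly what is imported from \cite{katsabekis2022arithmetical}, and once it is in hand the remaining ingredients (Auslander--Buchsbaum, the Peskine--Szpiro vanishing, and the reduction over $\mathbb{Z}$) are formal. A secondary point requiring care is the direction of the reduction-mod-$p$ comparison: projective dimension can only jump upward under specialization and coincides with its characteristic-zero value for almost all $p$, which is precisely what lets an upper bound on depth proved over $\mathbb{F}_p$ transfer back to $\mathbb{Q}$.
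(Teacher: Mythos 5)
Your proposal is correct and follows essentially the same route as the paper: Auslander--Buchsbaum to convert the claim into a lower bound on projective dimension, Peskine--Szpiro's inequality $\mathrm{cd}\leq\mathrm{pd}$ together with the cohomological-dimension bound of \cite{katsabekis2022arithmetical} in characteristic $p>0$, and then reduction to $K=\mathbb{Q}$ via a $\mathbb{Z}$-model and the fact that $\mathrm{pd}$ over $\mathbb{Q}$ agrees with $\mathrm{pd}$ over $\mathbb{F}_p$ for $p\gg 0$ (the paper cites this spreading-out statement directly rather than rederiving it).
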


\noindent Section \ref{sec5} of this paper focuses on determining the precise formula of the depth of generalized binomial edge ideals of certain classes of graphs, as well as comparing the depth to their respective bounds. In Theorem \ref{thmub=lb}, we provide an infinite class of graphs for which the lower bound given in Theorem \ref{thmlb} is equal to the upper bound given in Theorem \ref{thmupperbound}, and thus, is equal to $\mathrm{depth} (S/\mathcal{J}_{K_{m},G})$. In Theorem \ref{thmdepthcycle}, we calculate the depth of generalized binomial edge ideals of a class of graphs constructed from cyclic graphs. The technique used in Theorem \ref{thmdepthcycle} to calculate the depth of generalized binomial edge ideals of cyclic graphs is new. As a remark (Remark \ref{remub=lb+k}), we get for every $m\geq 2$ and $k\in\mathbb{N}$, there exists a connected graph $G_k$ for which $\mathrm{depth} (S/\mathcal{J}_{K_{m},G_k})$ is equal to the upper bound and $k$ more than the lower bound. In Theorem \ref{thmd=lb=ub-k}, we show that for every $m\geq 2$ and $k\in\mathbb{N}$, there exists a connected graph $G_k$ for which $\mathrm{depth} (S/\mathcal{J}_{K_{m},G_k})$ is equal to the lower bound and $k$ less than the upper bound. In \cite{acc}, Bolognini et al. introduced the notion of accessible graphs and strongly unmixed binomial edge ideals to classify the Cohen-Macaulay property of binomial edge ideals combinatorially. They demonstrate that the property of $J_G$ being strongly unmixed entails that $J_G$ is Cohen-Macaulay, which in turn requires that $G$ is accessible. They have also put up a conjecture on the converse of this statement. The conjecture has been proved for many classes (\cite{acc}, \cite{lmrr23}, \cite{sswhisker22}). In that direction, we calculate the depth of generalized binomial edge ideals of those graphs whose binomial edge ideals are strongly unmixed (see Theorem \ref{thmgensu}). If the conjecture is true, they are precisely the class of accessible graphs. Finally, we provide an example (Example \ref{exlb<d<ub}) of a graph $G$ for which $\mathrm{depth} (S/\mathcal{J}_{K_{m},G})$ is strictly greater than the lower bound and strictly less than the upper bound.

\section{Preliminaries}\label{sec2}
Let $G$ be a simple graph on $ [n] $. Let $ V(G) $ and $ E(G) $ denote the vertex set and edge set of $ G $, respectively. 
Let $T \subseteq [n]$. We write $G-T$ to denote the induced subgraph of $G$ on the vertex set $V(G)\setminus T$. By $G-v$, we mean the induced subgraph $G-\{v\}$, where $v \in V(G)$. Let $c_{G}(T)$ denote the number of connected components of $G-T$. A vertex $v\in V(G)$ is called a \textit{cut vertex} of $G$ if the number of connected components of $G$ is less than that of $G-v$. If $v$ is a cut vertex of the induced subgraph $G-(T\setminus \{v\})$ for each $v \in T$, then $T$ is said to have the \textit{cut point property} or $T$ is said to be a \textit{cut set} of $G$. Let $\tilde{G}$ represent the complete graph on $V(G)$ and $G_{1},G_{2},\cdots,G_{c_{G}(T)}$ be the connected components of $G-T$. Now, for a positive integer $ m\geq2 $ and $T\subseteq [n]$, we consider the ideal 
$$P_{T}(K_{m},G)=\big( \{x_{ij}:(i,j)\in [m] \times T\},\mathcal{J}_{K_{m},\tilde{G_{1}}},\mathcal{J}_{K_{m},\tilde{G_{2}}},\ldots,\mathcal{J}_{K_{m},\tilde{G}_{c_{G}(T)}}\big)$$ 
of the polynomial ring $S=K[x_{ij}:i\in [m],j \in [n]]$. Then $P_{T}(K_{m},G)$ is a prime ideal containing $\mathcal{J}_{K_{m},G}$. By \cite[Theorem 3.7]{rauh13}, $\mathcal{J}_{K_{m},G}$ is a radical ideal and the minimal prime ideals of $\mathcal{J}_{K_{m},G}$ are precisely the prime ideals $P_{T}(K_{m},G)$ for $T \in \mathcal{C}(G)$, where $\mathcal{C}(G)$ is the set of all cut sets of $G$. That is, 
$$ \mathcal{J}_{K_{m},G} = \bigcap_{T \in \mathcal{C}(G)}P_{T}(K_{m},G).$$

A simple graph is said to be \textit{complete} if there is an edge between every pair of vertices. A complete graph on $n$ vertices is denoted by $K_n$. Let $G$ be a simple graph. The \textit{neighbourhood} of a vertex $v \in V(G)$, denoted by $\mathcal{N}_{G}(v)$, is the set of vertices that are adjacent to $v$, i.e. $\mathcal{N}_{G}(v):= \{u \in V(G) : \{u, v\} \in E(G)\}$. For a vertex $v$ of $G$, let us define the graph $G_{v}$ as follows: $V(G_{v})=V(G) $ and $E(G_{v})=E(G)\cup \{\{u,w\} : u, w \in \mathcal{N}_{G}(v)\} $. A vertex $v\in V (G)$ is said to be a \textit{free} vertex of $G$ if the induced subgraph of $G$ on the vertex set $\mathcal{N}_{G}(v)$ is a complete graph. Otherwise, we call it a \textit{non-free} vertex. An induced subgraph of $G$ is called a \textit{clique} if it is complete.

\begin{definition}{\rm
A \textit{cycle} of length $n$, denoted by $C_n$, is a connected graph on $n$ vertices such that $V(C_n)=\{1,\ldots,n\}$ and $E(C_n)=\{\{i,i+1\}\mid 1\leq i\leq n-1\}\cup\{\{1,n\}\}$. A graph $G$ is called a \textit{chordal graph} if $G$ has no induced cycle of length $>3$.
}
\end{definition}

\begin{definition}{\rm
A graph $G$ is said to be a \textit{generalized block graph} if $G$ is chordal and for any three different maximal cliques $F_{i}$, $F_{j}$, $F_{k}$ of $G$, we have $F_{i}\cap F_j=F_j\cap F_k= F_i\cap F_k$ whenever $F_i\cap F_j\cap F_k\neq \emptyset$. By a \textit{block graph}, we mean a generalized block graph $G$ such that for any two maximal cliques $F_i$ and $F_j$, we have $\vert F_i\cap F_j\vert\leq 1$.
}
\end{definition}

We now go over a few lemmas and theorems that are required in order to prove our results.

\begin{theorem}[{\cite[Theorem 3.2]{arvindgen20}}] \label{thrm:2}
Let $G$ be a finite simple graph and $v$ be a non-free vertex of $G$. Then $$\mathcal{J}_{K_{m},G}=\mathcal{J}_{K_{m},G_{v}} \cap ((x_{iv}: i\in [m])+\mathcal{J}_{K_{m},G-v}).$$ 
\end{theorem}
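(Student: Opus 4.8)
The plan is to establish the two inclusions separately: the inclusion $\subseteq$ is immediate, and the reverse inclusion is the substantive part, which I would reduce to a single Plücker-type membership and then settle using the primary decomposition from Section~\ref{sec2}. For $\mathcal{J}_{K_{m},G}\subseteq\mathcal{J}_{K_{m},G_{v}}\cap\big((x_{iv}:i\in[m])+\mathcal{J}_{K_{m},G-v}\big)$: since $E(G)\subseteq E(G_{v})$, every generator $p_{(e,f)}$ of $\mathcal{J}_{K_{m},G}$ is a generator of $\mathcal{J}_{K_{m},G_{v}}$; and for a generator $p_{(e,f)}$ with $e=\{i,j\}\in E(K_{m})$ and $f\in E(G)$, if $f=\{v,u\}$ is incident to $v$ then $p_{(e,f)}=\pm(x_{iv}x_{ju}-x_{iu}x_{jv})\in(x_{\ell v}:\ell\in[m])$, while if $f$ is not incident to $v$ then $f\in E(G-v)$ and $p_{(e,f)}\in\mathcal{J}_{K_{m},G-v}$. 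I also record, for later use, that $\mathcal{J}_{K_{m},G-v}\subseteq\mathcal{J}_{K_{m},G}$, since $G-v$ is an induced subgraph of $G$.

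\textbf{Reduction via a grading.} For the reverse inclusion I would grade $S$ by the total degree in the variables $x_{1v},\dots,x_{mv}$, writing $S=\bigoplus_{d\geq0}S_{d}$. Each of the ideals $\mathcal{J}_{K_{m},G}$, $\mathcal{J}_{K_{m},G_{v}}$, $\mathcal{J}_{K_{m},G-v}$ and $(x_{iv}:i\in[m])$ is homogeneous for this grading (each is generated by elements homogeneous of degree $0$ or $1$), hence so are their sums and intersections; moreover the degree-$0$ part of $(x_{iv}:i\in[m])$ is zero, and $S_{d}\subseteq(x_{iv}:i\in[m])$ for all $d\geq1$. So it suffices to take a homogeneous $f\in\mathcal{J}_{K_{m},G_{v}}\cap\big((x_{iv}:i\in[m])+\mathcal{J}_{K_{m},G-v}\big)$ of degree $d$ and show $f\in\mathcal{J}_{K_{m},G}$. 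If $d=0$ the $(x_{iv})$-summand contributes nothing, so $f\in\mathcal{J}_{K_{m},G-v}\subseteq\mathcal{J}_{K_{m},G}$. If $d\geq1$, write $\mathcal{J}_{K_{m},G_{v}}=\mathcal{J}_{K_{m},G}+(q_{\beta}:\beta)$, where the $q_{\beta}=[i\ j\mid u_{a}\ u_{b}]$ are the $2$-minors attached to the new edges $\{u_{a},u_{b}\}$ of $G_{v}$ (so $u_{a},u_{b}\in\mathcal{N}_{G}(v)$); extracting the degree-$d$ component of an expression for $f$ in these generators, and using that the $q_{\beta}$ have degree $0$, gives $f=g+\sum_{\beta}q_{\beta}h_{\beta}$ with $g\in\mathcal{J}_{K_{m},G}$ and $h_{\beta}\in S_{d}\subseteq(x_{iv}:i\in[m])$. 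Thus everything reduces to the claim that $x_{kv}\cdot[i\ j\mid u_{a}\ u_{b}]\in\mathcal{J}_{K_{m},G}$ for every $k\in[m]$ whenever $u_{a},u_{b}\in\mathcal{N}_{G}(v)$.

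\textbf{The key claim, and where the difficulty lies.} I expect this claim to be the main obstacle. A direct attempt — expanding the $3\times3$ determinant on rows from $\{k,i,j\}$ and columns $v,u_{a},u_{b}$ — only produces Plücker-type syzygies among these minors and, in one case, a relation weakened by an unwanted factor of $2$, so it does not settle the claim in arbitrary characteristic. Instead I would invoke the primary decomposition $\mathcal{J}_{K_{m},G}=\bigcap_{T\in\mathcal{C}(G)}P_{T}(K_{m},G)$ recalled in Section~\ref{sec2}. Fix $T\in\mathcal{C}(G)$. If $v\in T$ then $x_{kv}\in P_{T}(K_{m},G)$, hence $x_{kv}\cdot[i\ j\mid u_{a}\ u_{b}]\in P_{T}(K_{m},G)$. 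If $v\notin T$ then $[i\ j\mid u_{a}\ u_{b}]$ itself lies in $P_{T}(K_{m},G)$: if $u_{a}\in T$ (or $u_{b}\in T$) both monomials of the minor lie in $(x_{\ell u_{a}}:\ell\in[m])\subseteq P_{T}(K_{m},G)$; and if $u_{a},u_{b}\notin T$ then, since $v$ is adjacent in $G$ to both $u_{a}$ and $u_{b}$, the vertices $v,u_{a},u_{b}$ lie in a single connected component $G_{\ell}$ of $G-T$, so $[i\ j\mid u_{a}\ u_{b}]=\pm p_{(\{i,j\},\{u_{a},u_{b}\})}\in\mathcal{J}_{K_{m},\tilde{G_{\ell}}}\subseteq P_{T}(K_{m},G)$. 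In all cases $x_{kv}\cdot[i\ j\mid u_{a}\ u_{b}]\in P_{T}(K_{m},G)$, and intersecting over $T\in\mathcal{C}(G)$ yields the claim.

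With the claim established, each $q_{\beta}h_{\beta}$ lies in $\mathcal{J}_{K_{m},G}$ (because $h_{\beta}\in(x_{iv}:i\in[m])$), so $f=g+\sum_{\beta}q_{\beta}h_{\beta}\in\mathcal{J}_{K_{m},G}$, which finishes the reverse inclusion and hence the theorem. As an alternative route, one could instead observe that both sides are radical ideals — applying Rauh's theorem to $G$, to $G_{v}$, and to $G-v$ — and compare their minimal primes directly, using that the minimal primes of $(x_{iv}:i\in[m])+\mathcal{J}_{K_{m},G-v}$ are exactly the $P_{T'\cup\{v\}}(K_{m},G)$ with $T'\in\mathcal{C}(G-v)$; but the grading reduction above keeps the combinatorics of cut sets to a minimum, so I would favour it.
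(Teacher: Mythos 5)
Your argument is correct and complete. Note first that the paper does not prove this statement at all: it is imported verbatim from \cite[Theorem 3.2]{arvindgen20}, so there is no in-paper proof to compare with; your proposal is in effect a self-contained substitute for the citation. The easy inclusion is handled exactly as one would expect, the grading of $S$ by total degree in the column-$v$ variables is legitimate (all four ideals are generated by elements homogeneous of degree $0$ or $1$ for this grading, so the intersection is homogeneous and it suffices to treat homogeneous elements), and your reduction is sound: degree-$0$ elements of the right-hand side land in $\mathcal{J}_{K_m,G-v}\subseteq\mathcal{J}_{K_m,G}$, while for degree $d\geq 1$ everything reduces to $x_{kv}\cdot[i\ j\mid u_a\ u_b]\in\mathcal{J}_{K_m,G}$ for $u_a,u_b\in\mathcal{N}_G(v)$. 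Your proof of that key claim via Rauh's decomposition $\mathcal{J}_{K_m,G}=\bigcap_{T\in\mathcal{C}(G)}P_T(K_m,G)$ (recalled in Section \ref{sec2}) is correct: the three cases $v\in T$; $v\notin T$ with $u_a$ or $u_b$ in $T$; and $v,u_a,u_b\notin T$ (where the two edges at $v$ force $u_a,u_b$ into one component, so the minor lies in $\mathcal{J}_{K_m,\tilde{G_\ell}}$) exhaust all cut sets, including $T=\emptyset$. This is a genuinely respectable route: it trades the explicit Ohtani-type determinantal identities (which, as you note, only go through cleanly when $k\in\{i,j\}$, the case $m=2$) for the combinatorial description of the minimal primes, at the price of making the lemma logically dependent on Rauh's theorem rather than elementary generator manipulation; since the paper already invokes Rauh's decomposition, nothing circular arises. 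Two small remarks: your argument never uses that $v$ is non-free, so you have in fact proved the identity for an arbitrary vertex (the non-freeness is only needed elsewhere, e.g. in Lemma \ref{lem:2}); and the alternative route you sketch at the end (comparing minimal primes of both sides) would need extra care, because the primes $P_{T'\cup\{v\}}(K_m,G)$ with $T'\in\mathcal{C}(G-v)$ need not all be cut-set primes of $G$, so the main argument you actually carried out is the better one.
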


\begin{lemma}[{\cite[Lemma 3.4]{arvindgen20}}]\label{lem:2} Let $G$ be a graph and $v$ be a non free vertex of $G$. Then $\mathrm{max}\{\mathrm{iv}(G_{v}),\mathrm{iv}(G-v),\mathrm{iv}(G_{v}-v)\} < \mathrm{iv}(G)$, where $\mathrm{iv}(G)$ denotes the number of non-free vertex of $G$.
\end{lemma}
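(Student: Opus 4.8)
The plan is to derive all three inequalities from two simple monotonicity facts, combined with the observation that in each of the three operations the vertex $v$ is dropped from the count of non-free vertices — literally by deletion in the cases $G-v$ and $G_v-v$, and because it becomes free in the case $G_v$.

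First I would record that vertex deletion never creates non-free vertices: for any graph $H$ and $u\in V(H)$, every non-free vertex of $H-u$ is already a non-free vertex of $H$. Indeed, if $w\neq u$ is non-free in $H-u$, then the induced subgraph of $H-u$ on $\mathcal{N}_{H-u}(w)=\mathcal{N}_H(w)\setminus\{u\}$ is not complete, so there are $a,b$ in this set with $\{a,b\}\notin E(H-u)$; since $a,b\neq u$ this forces $a,b\in\mathcal{N}_H(w)$ and $\{a,b\}\notin E(H)$, so $w$ is non-free in $H$. Applying this with $H=G$, $u=v$, the non-free vertices of $G-v$ are non-free vertices of $G$ other than $v$; as $v$ is non-free in $G$ by hypothesis, this already gives $\mathrm{iv}(G-v)\leq\mathrm{iv}(G)-1$. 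Applying it with $H=G_v$, $u=v$ gives $\mathrm{iv}(G_v-v)\leq\mathrm{iv}(G_v)$, so it remains only to bound $\mathrm{iv}(G_v)$.

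Next I would show that passing from $G$ to $G_v$ also creates no non-free vertices, i.e. every free vertex $w$ of $G$ stays free in $G_v$; this is the step that uses the special shape of $G_v$ (all edges among $\mathcal{N}_G(v)$ are added, not some arbitrary edge). If $w\notin\mathcal{N}_G(v)$, then no newly added edge is incident to $w$, so $\mathcal{N}_{G_v}(w)=\mathcal{N}_G(w)$, and the induced subgraph of $G_v$ on this set contains the induced subgraph of $G$ on it, which is complete, hence is complete. If $w\in\mathcal{N}_G(v)$, then $v\in\mathcal{N}_G(w)$, and since $\mathcal{N}_G(w)$ is a clique every vertex of $\mathcal{N}_G(w)\setminus\{v\}$ is adjacent to $v$, so $\mathcal{N}_G(w)\setminus\{v\}\subseteq\mathcal{N}_G(v)\setminus\{w\}$; therefore $\mathcal{N}_{G_v}(w)=\mathcal{N}_G(w)\cup(\mathcal{N}_G(v)\setminus\{w\})=\{v\}\cup(\mathcal{N}_G(v)\setminus\{w\})$, and this set is a clique in $G_v$ because $\mathcal{N}_G(v)\setminus\{w\}$ has been made complete and $v$ is adjacent to all of $\mathcal{N}_G(v)$. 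In either case $w$ is free in $G_v$. Finally, $v$ itself is free in $G_v$, since $\mathcal{N}_{G_v}(v)=\mathcal{N}_G(v)$ and this neighbourhood is now a clique. Thus the non-free vertices of $G_v$ form a subset of the non-free vertices of $G$ not containing $v$, so $\mathrm{iv}(G_v)\leq\mathrm{iv}(G)-1$, and together with the previous paragraph this yields all three inequalities.

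The part that needs the most care is the second case above: verifying the equality $\mathcal{N}_{G_v}(w)=\{v\}\cup(\mathcal{N}_G(v)\setminus\{w\})$ via the containment $\mathcal{N}_G(w)\setminus\{v\}\subseteq\mathcal{N}_G(v)$, which is exactly where the freeness of $w$ in $G$ is used. Everything else is routine manipulation of neighbourhoods and the monotonicity of completeness under adding edges.
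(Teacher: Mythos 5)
Your proof is correct: the two monotonicity facts (deletion cannot create non-free vertices, and every free vertex of $G$ stays free in $G_v$ while $v$ itself becomes free there) are each verified carefully, including the key containment $\mathcal{N}_G(w)\setminus\{v\}\subseteq\mathcal{N}_G(v)\setminus\{w\}$ for free $w\in\mathcal{N}_G(v)$, and together they give all three strict inequalities. The paper itself only cites this lemma from \cite{arvindgen20} without reproducing a proof, and your argument is essentially the standard one used there, so nothing further is needed.
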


\begin{theorem}[{\cite[Corollary 3.4]{ci20}}] \label{thrm:1} Let $m,n \geq 2$ and let $G$ be a connected block graph on the vertex set $[n]$. Then $\mathrm{depth}(S/\mathcal{J}_{K_{m},G})= m+n-1$.
\end{theorem}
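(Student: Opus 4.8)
\emph{Proof idea.} I would argue by induction on $\mathrm{iv}(G)$, the number of non-free vertices of $G$. In a connected block graph a vertex is non-free exactly when it is a cut vertex: it then lies in at least two maximal cliques, and any two maximal cliques meet in at most one vertex. Hence the base case $\mathrm{iv}(G)=0$ forces $G=K_n$, where $\mathcal{J}_{K_m,K_n}=I_{2}(X)$ is the ideal of $2$-minors of a generic $m\times n$ matrix; as a determinantal ring $S/I_{2}(X)$ is Cohen--Macaulay of dimension $m+n-1$, so $\mathrm{depth}(S/\mathcal{J}_{K_m,K_n})=m+n-1$ (the case $n=1$ being trivial).

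For the inductive step pick a non-free (hence cut) vertex $v$ and apply Theorem \ref{thrm:2} to obtain the short exact sequence
\begin{multline*}
0\longrightarrow \frac{S}{\mathcal{J}_{K_m,G}}\longrightarrow \frac{S}{\mathcal{J}_{K_m,G_v}}\oplus \frac{S}{(x_{iv}:i\in[m])+\mathcal{J}_{K_m,G-v}}\\
\longrightarrow \frac{S}{\mathcal{J}_{K_m,G_v}+(x_{iv}:i\in[m])+\mathcal{J}_{K_m,G-v}}\longrightarrow 0.
\end{multline*}
The plan is to read off the depths of the two right-hand modules from the induction hypothesis, which rests on three combinatorial observations that I regard as the technical heart of the proof: (i) $G_v$ and the induced subgraph $G_v-v$ are again \emph{connected} block graphs --- the key point being that a maximal clique of $G$ not containing $v$ meets $\mathcal{N}_G(v)$ in at most one vertex (otherwise a triangle forces two distinct maximal cliques through $v$ to coincide), so in $G_v$ the cliques through $v$ merge into a single clique while all blocks remain cliques meeting pairwise in at most one vertex, and $G_v-v$ is connected because that clique glues together the components of $G-v$; (ii) the components of $G-v$ are block graphs; (iii) by Lemma \ref{lem:2}, each of $G_v$, $G_v-v$ and the components of $G-v$ has strictly smaller $\mathrm{iv}$ than $G$, so the induction hypothesis applies to each.

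Granting these, the induction hypothesis gives $\mathrm{depth}(S/\mathcal{J}_{K_m,G_v})=m+n-1$; eliminating the variables $x_{iv}$ identifies the second summand with $\bar{S}/\mathcal{J}_{K_m,G-v}$, of depth $\sum_i(m+|V(D_i)|-1)=c(m-1)+n-1\geq m+n-1$ by the induction hypothesis on the components $D_1,\dots,D_c$ of $G-v$; and since setting $x_{iv}=0$ kills exactly the generators of $\mathcal{J}_{K_m,G_v}$ coming from edges at $v$ while $E(G-v)\subseteq E(G_v-v)$, the rightmost module equals $\bar{S}/\mathcal{J}_{K_m,G_v-v}$, of depth $m+(n-1)-1=m+n-2$. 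Feeding these into the long exact sequence in local cohomology of the short exact sequence above, the depth lemma gives $\mathrm{depth}(S/\mathcal{J}_{K_m,G})\geq\min\{m+n-1,\,(m+n-2)+1\}=m+n-1$; conversely $H^{m+n-2}_{\mathfrak{m}}$ of the rightmost module is nonzero while $H^{m+n-2}_{\mathfrak{m}}$ of the middle module vanishes (its depth being $m+n-1$), so $H^{m+n-1}_{\mathfrak{m}}(S/\mathcal{J}_{K_m,G})\neq0$ and $\mathrm{depth}(S/\mathcal{J}_{K_m,G})\leq m+n-1$. Equality follows. The anticipated obstacle lies entirely in (i)--(iii): once connected block graphs are seen to be closed under $G\mapsto G_v$ and $G\mapsto G_v-v$ with $\mathrm{iv}$ strictly dropping, the homological part is routine.
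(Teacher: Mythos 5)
The paper never proves this statement: it is imported from \cite[Corollary 3.4]{ci20} (where it is a special case of a depth formula for generalized block graphs), so there is no internal proof to compare yours against. Your blind argument is a correct, essentially self-contained derivation, and it runs on precisely the machinery this paper deploys for its other depth computations (Theorems \ref{thmd-comp}, \ref{thmdepthcycle}, \ref{thmgensu}): induction on $\mathrm{iv}(G)$, the decomposition of Theorem \ref{thrm:2} with its short exact sequence, Lemma \ref{lem:2}, the Depth Lemma for the lower bound, and the long exact sequence of local cohomology for the matching upper bound (the paper, in analogous situations, instead gets the upper bound from Theorem \ref{thmupperbound}, using $\kappa(G)=1$ for a non-complete connected block graph; either route works). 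Your base case via Cohen--Macaulayness of $I_2(X)$, the identification of the cokernel with $S_v/\mathcal{J}_{K_m,G_v-v}$ (the same identity the paper uses in the proof of Theorem \ref{thmd-comp}), and the count $c(m-1)+n-1\geq m+n-1$ for the $c\geq 2$ components of $G-v$ --- which correctly exploits that a non-free vertex of a block graph is a cut vertex --- are all sound. One point in (i) needs a word more: under the paper's definition a block graph must be chordal, and pairwise singleton intersections of maximal cliques alone do not imply chordality (consider $C_5$), so you should also check that $G_v$ is chordal. This is easy: an induced cycle of length at least $4$ in $G_v$ can contain at most two vertices of $\mathcal{N}_G(v)$, necessarily consecutive, and if the edge joining them is a new edge, replacing it by the path through $v$ yields an induced cycle of length at least $5$ in $G$, contradicting chordality of $G$; hence $G_v$, and therefore its induced subgraph $G_v-v$, are again connected block graphs and your induction closes.
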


\begin{theorem}[Auslander-Buchbaum Formula]
Let $(A,\mathfrak{m})$ be a Noetherian local ring and $ M \neq 0 $ be a finite A-module. If the projective dimension $\mathrm{pd}_{A}(M)$ of $M$ is finite, then 
$$\mathrm{pd}_{A}(M)+\mathrm{depth}(M)= \mathrm{depth}(A).$$ 
\end{theorem}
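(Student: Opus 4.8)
The plan is to prove the identity by induction on $\mathrm{depth}(A)$, arranging things so that quotienting by a well-chosen regular element drops both $\mathrm{depth}(A)$ and $\mathrm{depth}(M)$ by one while leaving $\mathrm{pd}_A(M)$ unchanged. I will freely use the standard characterization $\mathrm{depth}(N)=\inf\{i:\mathrm{Ext}^i_A(A/\mathfrak{m},N)\neq 0\}$, equivalent to the local cohomology definition recalled above, together with two routine facts: (i) if $0\to N'\to F\to N\to 0$ is exact with $F$ free and $\mathrm{pd}_A(N)\geq 1$, then $\mathrm{pd}_A(N')=\mathrm{pd}_A(N)-1$; and (ii) if $x\in\mathfrak{m}$ is a nonzerodivisor on both $A$ and $N$, then $\mathrm{pd}_{A/xA}(N/xN)=\mathrm{pd}_A(N)$ (tensor a minimal free resolution of $N$ with $A/xA$: it stays exact since the relevant $\mathrm{Tor}$'s vanish, and stays minimal) and $\mathrm{depth}_{A/xA}(N/xN)=\mathrm{depth}_A(N)-1$.

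Base case $\mathrm{depth}(A)=0$. Here I claim $M$ must be free, so both sides vanish. If not, take a minimal free resolution $0\to F_p\xrightarrow{\phi_p}F_{p-1}\to\cdots\to F_0\to M\to 0$ with $p\geq 1$ and $F_p\neq 0$; minimality gives $\phi_p(F_p)\subseteq\mathfrak{m}F_{p-1}$, and $\phi_p$ is injective. Since $\mathrm{depth}(A)=0$ means $\mathfrak{m}\in\mathrm{Ass}(A)$, pick $0\neq a\in A$ with $a\mathfrak{m}=0$; then $\phi_p(aF_p)\subseteq a\mathfrak{m}F_{p-1}=0$, so $aF_p=0$ by injectivity, contradicting $F_p$ free and nonzero. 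Hence $M\cong F_0$ is free and $\mathrm{depth}(M)=\mathrm{depth}(A)=0$.

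Inductive step. Let $\mathrm{depth}(A)=d\geq 1$ and assume the formula over every Noetherian local ring of smaller depth. We may assume $\mathrm{pd}_A(M)\geq 1$ (else $M$ is free and the claim is clear). If $\mathrm{depth}(M)\geq 1$, then $\mathfrak{m}$ is an associated prime of neither $A$ nor $M$, so by prime avoidance there is $x\in\mathfrak{m}$ that is a nonzerodivisor on both; invoking the inductive hypothesis over $A/xA$ for $M/xM$ and using (ii) gives $\mathrm{pd}_A(M)+\mathrm{depth}_A(M)-1=d-1$. If instead $\mathrm{depth}(M)=0$, choose $0\to M'\to F_0\to M\to 0$ with $F_0$ free; then $M'\neq 0$ and $\mathrm{pd}_A(M')=\mathrm{pd}_A(M)-1$ by (i). Feeding this sequence into the long exact sequence of $\mathrm{Ext}^\bullet_A(A/\mathfrak{m},-)$ and using $\mathrm{Hom}(A/\mathfrak{m},F_0)=0$ (as $d\geq 1$) and $\mathrm{Hom}(A/\mathfrak{m},M)\neq 0$ (as $\mathrm{depth}(M)=0$), we obtain $\mathrm{Hom}(A/\mathfrak{m},M')=0$ and an injection $\mathrm{Hom}(A/\mathfrak{m},M)\hookrightarrow\mathrm{Ext}^1_A(A/\mathfrak{m},M')$, which forces $\mathrm{depth}(M')=1$. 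Now the previous case applies to $M'$ and yields $\mathrm{pd}_A(M')+\mathrm{depth}(M')=d$, i.e. $(\mathrm{pd}_A(M)-1)+1=d$; since $\mathrm{depth}(M)=0$ this is the assertion.

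The main obstacle is the case $\mathrm{depth}(M)=0$ inside the inductive step: there is no regular element to strip off $M$, so one cannot lower $\mathrm{depth}(A)$ directly. The fix is to pass to the first syzygy $M'$, which — purely from the $\mathrm{Ext}$ long exact sequence — necessarily has depth exactly $1$; this simultaneously reduces to the already-settled positive-depth case and explains the extra unit picked up by $\mathrm{pd}$. Everything else is bookkeeping around facts (i) and (ii): exactness and minimality of a minimal free resolution reduced modulo a regular element, and the syzygy shift for projective dimension, none of which needs more than the definitions.
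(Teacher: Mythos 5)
Your proof is correct. The paper itself offers no argument for this statement: it is quoted in the preliminaries as the classical Auslander--Buchsbaum formula, so there is no ``paper proof'' to compare against. What you give is essentially the standard textbook proof (as in Bruns--Herzog, Theorem 1.3.3): induction on $\mathrm{depth}(A)$, with the socle argument against a minimal resolution in the base case, reduction modulo an element regular on both $A$ and $M$ (via prime avoidance, preserving $\mathrm{pd}$ and dropping both depths by one) when $\mathrm{depth}(M)\geq 1$, and the syzygy trick with the $\mathrm{Ext}^{\bullet}_A(A/\mathfrak{m},-)$ long exact sequence forcing $\mathrm{depth}(M')=1$ when $\mathrm{depth}(M)=0$. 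All the supporting facts you invoke are stated with the right hypotheses (in particular you require $x$ regular on both $A$ and $N$ for the preservation of projective dimension). One cosmetic point: when you say ``the previous case applies to $M'$'', you should allow for $\mathrm{pd}_A(M')=0$, in which case $M'$ is free and the identity $\mathrm{pd}_A(M')+\mathrm{depth}(M')=d$ holds trivially (and forces $d=1$, consistent with $\mathrm{depth}(M')=1$); this is covered by your earlier remark that the free case is clear, so it is an imprecision of phrasing rather than a gap.
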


Note that in this paper, by saying the depth of the generalized binomial edge ideal of a graph $G$, we mean the depth of the quotient ring $S/\mathcal{J}_{K_m,G}$.

\section{Lower Bound for depth of Generalized Binomial Edge Ideals}\label{sec3}
In this section, we generalize the notion of $d$-compatible map given in \cite{rouzbahani2022depth}. Using this map, we give a combinatorial lower bound of $\mathrm{depth}(S/\mathcal{J}_{K_{m},G}) $.

\begin{definition}\label{def:dcomp}{\rm
Let $\mathcal{G}$ denote the set of all simple graphs and $\mathcal{K}$ denote the set of all complete graphs. Let $ G \in \mathcal{G}$ be a graph on $[n]$ and $K_{m}\in \mathcal{K}$ be the complete graph on $m$ vertices. Now, we are set to define the notion of $d$-compatible maps from $\mathcal{K\times G}$ to $\mathbb{N}_{0}$. A map $\Psi:\mathcal{K \times G}\rightarrow\mathbb{N}_{0}$ is said to be \textit{$d$-compatible} if the following conditions are fulfilled:
\begin{enumerate}
\item If $G= \bigsqcup_{i=1}^{t}K_{n_{i}}$, then $\Psi(K_{m},G)\leq t(m-1)+\sum_{i=1}^{t}n_{i}$;
\item If $G\neq \bigsqcup_{i=1}^{t}K_{n_{i}}$, then there exists a non-free vertex $v \in V(G)$ such that 
\begin{enumerate}
\item $\Psi(K_{m},G-v)\geq \Psi(K_{m},G)$,
\item $\Psi(K_{m},G_{v})\geq \Psi(K_{m},G)$,
\item $\Psi(K_{m},G_{v}-v)\geq \Psi(K_{m},G)-1$.
\end{enumerate}
\end{enumerate}
}
\end{definition}

\begin{theorem}\label{thmd-comp}
 Let $G$ be a graph and $\Psi$ be a $d$-compatible map. Then, $$\mathrm{depth}(S/\mathcal{J}_{K_{m},G})\geq \Psi(K_{m},G).$$ \end{theorem}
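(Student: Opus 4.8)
The plan is to induct on $\mathrm{iv}(G)$, the number of non-free vertices of $G$, using the decomposition of Theorem \ref{thrm:2} together with a standard short exact sequence / Mayer--Vietoris argument on depth.

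\medskip

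\noindent\textbf{Base case.} When $\mathrm{iv}(G)=0$, every vertex is free, which forces $G$ to be a disjoint union of complete graphs, say $G=\bigsqcup_{i=1}^t K_{n_i}$. In this case $\mathcal{J}_{K_m,G}$ is (after a change of variables / tensor decomposition) a sum of determinantal ideals $I_2$ of generic $m\times n_i$ matrices living in disjoint sets of variables, so $S/\mathcal{J}_{K_m,G}$ is a tensor product of the corresponding determinantal rings over $K$. The depth of $K[x_{ij}:i\in[m],j\in[n_i]]/I_2$ is known (it equals $n_i+m-1$), and depth is additive over tensor products of finitely generated graded algebras over a field (equivalently, one adds the codepths). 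Hence $\mathrm{depth}(S/\mathcal{J}_{K_m,G})=\sum_{i=1}^t(n_i+m-1)=t(m-1)+\sum_i n_i$, which by condition (1) of Definition \ref{def:dcomp} is $\geq \Psi(K_m,G)$. (Actually it is also the point where isolated vertices, i.e. $n_i=1$, contribute cleanly.)

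\medskip

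\noindent\textbf{Inductive step.} Suppose $\mathrm{iv}(G)\geq 1$, so by condition (2) there is a non-free vertex $v\in V(G)$ satisfying (2a)--(2c). Apply Theorem \ref{thrm:2}: writing $A=\mathcal{J}_{K_m,G_v}$ and $B=(x_{iv}:i\in[m])+\mathcal{J}_{K_m,G-v}$, we have $\mathcal{J}_{K_m,G}=A\cap B$, and there is a short exact sequence
$$0\longrightarrow S/(A\cap B)\longrightarrow S/A\oplus S/B\longrightarrow S/(A+B)\longrightarrow 0.$$
The ideal $A+B = (x_{iv}:i\in[m])+\mathcal{J}_{K_m,(G_v)-v}$, so $S/(A+B)\cong S'/\mathcal{J}_{K_m,G_v-v}$ where $S'$ is the polynomial ring without the $x_{iv}$ variables; its depth is therefore $\mathrm{depth}(S/\mathcal{J}_{K_m,G_v-v})$ (the $m$ killed variables are a regular sequence, but they are simply absent from $S'$, so there is no shift). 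Similarly $S/B\cong S'/\mathcal{J}_{K_m,G-v}$ has depth $\mathrm{depth}(S/\mathcal{J}_{K_m,G-v})$, and $S/A$ has depth $\mathrm{depth}(S/\mathcal{J}_{K_m,G_v})$. By the inductive hypothesis (legitimate since, by Lemma \ref{lem:2}, each of $G_v,\ G-v,\ G_v-v$ has strictly smaller $\mathrm{iv}$, and the restriction of $\Psi$ is still a $d$-compatible map when the ambient $K_m$ is fixed) together with (2a)--(2c):
$$\mathrm{depth}(S/A)\geq \Psi(K_m,G_v)\geq \Psi(K_m,G),\quad \mathrm{depth}(S/B)\geq \Psi(K_m,G-v)\geq \Psi(K_m,G),$$
$$\mathrm{depth}(S/(A+B))\geq \Psi(K_m,G_v-v)\geq \Psi(K_m,G)-1.$$
Now feed these into the depth inequalities coming from the long exact sequence in local cohomology attached to the short exact sequence above: one has
$$\mathrm{depth}(S/(A\cap B))\geq \min\{\mathrm{depth}(S/A\oplus S/B),\ \mathrm{depth}(S/(A+B))+1\}\geq \Psi(K_m,G),$$
using $\mathrm{depth}(S/A\oplus S/B)=\min\{\mathrm{depth}(S/A),\mathrm{depth}(S/B)\}$. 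This closes the induction.

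\medskip

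\noindent\textbf{Main obstacle.} The delicate point is not the homological algebra (the depth lemma for a short exact sequence is standard), but making sure the bookkeeping around the variables is exactly right: namely that passing from $S$ to $S'$ when we mod out by $(x_{iv}:i\in[m])$ does not introduce a spurious $+m$ or $-m$ in the depth comparison, and that "$\mathrm{depth}$" is consistently measured as the depth of the cyclic quotient module over whichever polynomial ring contains it (which, crucially, is what makes the numbers $\Psi(K_m,\,\cdot\,)$ — defined purely combinatorially — line up across the three subgraphs, each of which naturally lives over a ring with fewer variables). A clean way to avoid this is to phrase everything in terms of $\mathrm{depth}$ of the quotient ring intrinsically (not relative to a fixed ambient $S$), and to verify that Theorem \ref{thrm:2} is compatible with that convention; the inductive hypothesis must be stated for all graphs simultaneously (on all vertex sets), which is fine because Lemma \ref{lem:2} gives a well-founded induction on $\mathrm{iv}$.
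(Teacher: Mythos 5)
Your proposal is correct and follows essentially the same route as the paper: induction on $\mathrm{iv}(G)$, with the base case handled via the tensor-product decomposition for disjoint unions of complete graphs (the paper cites Theorem \ref{thrm:1} where you invoke the known depth of the determinantal ring of $2$-minors, which is the same fact), and the inductive step via Theorem \ref{thrm:2}, the resulting short exact sequence, the Depth Lemma, Lemma \ref{lem:2}, and conditions (2a)--(2c) of Definition \ref{def:dcomp}. Your closing remark on the variable bookkeeping matches the paper's use of the smaller ring $S_v$, so no discrepancy arises.
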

\begin{proof} We proceed by using induction on $\mathrm{iv}(G)$, the number of non-free vertices of $G$. If $\mathrm{iv}(G)=0$, then $G$ is a disjoint union of complete graphs, that is, $G=\bigsqcup_{i=1}^{t}K_{n_{i}}$ for some $n_{i}\geq 1$ and $1\leq i\leq t$. Since $\mathcal{J}_{K_{m},K_{n_1}},\ldots, \mathcal{J}_{K_{m},K_{n_t}}$ are generated in pairwise disjoint sets of variables, we have
$$ S/\mathcal{J}_{K_{m},G} \cong S_{1}/\mathcal{J}_{K_{m},K_{n_1}} \otimes_{K} S_{2}/\mathcal{J}_{K_{m},K_{n_2}} \otimes_{K} \cdots \otimes_{K} S_{t}/\mathcal{J}_{K_{m},K_{n_t}},$$ 
where $S_{i}=K[x_{1j},\cdots,x_{mj}:j\in V(K_{n_i})]$ for $1\leq i\leq t$. Therefore, 
\begin{align*}
    \mathrm{depth}(S/\mathcal{J}_{K_{m},G})&= \sum_{i=1}^{t} \mathrm{depth}(S_{i}/\mathcal{J}_{K_{m},K_{n_i}})\\
    &=t(m-1)+\sum_{i=1}^{t}n_{i}\quad (\text{by Theorem \ref{thrm:1}}).
\end{align*}
Hence, the condition (1) in the definition of $d$-compatible map gives $\mathrm{depth}(S/\mathcal{J}_{K_{m},G}) \geq \Psi(K_{m},G)$.
Now, let us assume $\mathrm{iv}(G)>0$. Then $G\neq \bigsqcup_{i=1}^{t}K_{n_i}$, and there exists a non-free vertex $v\in V(G)$ for $\Psi$ which satisfies the condition (2) of Definition \ref{def:dcomp}. Assume that the result holds for all graphs $G'$ with $\mathrm{iv}(G')<\mathrm{iv}(G)$. By Theorem \ref{thrm:2}, we can write
$$\mathcal{J}_{K_{m},G}=\mathcal{J}_{K_{m},G_{v}} \cap ((x_{iv}: i\in [m])+\mathcal{J}_{K_{m},G-v}).$$
Again, observe that 
$$\mathcal{J}_{K_{m},G_{v}}+((x_{iv}: i\in [m])+\mathcal{J}_{K_{m},G-v})= ((x_{iv}: i\in [m])+\mathcal{J}_{K_{m},G_{v}-v}).$$
Therefore, we obtain the following short exact sequence:
$$0 \longrightarrow S/\mathcal{J}_{K_{m},G} \longrightarrow S/\mathcal{J}_{K_{m},G_{v}} \oplus S/((x_{iv}: i\in [m])+\mathcal{J}_{K_{m},G-v})$$ $$  \longrightarrow S/((x_{iv}: i\in [m])+\mathcal{J}_{K_{m},G_{v}-v}) \longrightarrow 0.$$ 
namely, 
$$0 \longrightarrow S/\mathcal{J}_{K_{m},G} \longrightarrow S/\mathcal{J}_{K_{m},G_{v}} \oplus S_{v}/\mathcal{J}_{K_{m},G-v} \longrightarrow S_{v}/\mathcal{J}_{K_{m},G_{v}-v} \longrightarrow 0, $$ 
where $S_{v}= K[x_{ij}:i\in [m],j\in V(G-v)]$. Now, it is well-known that $\mathrm{depth}(M\oplus N)=\mathrm{min}\{\mathrm{depth}(M),\mathrm{depth}(N)\}$, where $M,N$ are finitely generated modules over $S$. Therefore, the well-known Depth Lemma gives
\begin{equation*}
\mathrm{depth}(S/\mathcal{J}_{K_{m},G})\geq \mathrm{min} \{ \mathrm{depth}(S/\mathcal{J}_{K_{m},G_{v}}),\mathrm{depth}(S_{v}/\mathcal{J}_{K_{m},G-v}),
\end{equation*}
\begin{equation}\label{ineq1}
\quad\quad\quad\quad\quad\mathrm{depth}(S_{v}/\mathcal{J}_{K_{m},G_{v}-v})+1\}.
\end{equation}
Again, due to Lemma \ref{lem:2}, Definition \ref{def:dcomp} part (2), and induction hypothesis, we have the following inequalities:
\begin{equation}\label{ineq2}
    \mathrm{depth}(S/\mathcal{J}_{K_{m},G_{v}}) \geq \Psi(K_{m},G_{v})\geq \Psi(K_{m},G),
\end{equation}
\begin{equation}\label{ineq3}
    \mathrm{depth}(S_{v}/\mathcal{J}_{K_{m},G-v}) \geq \Psi(K_{m},G-v)\geq \Psi(K_{m},G),
\end{equation}
\begin{equation}\label{ineq4}
    \mathrm{depth}(S_{v}/\mathcal{J}_{K_{m},G_{v}-v}) \geq \Psi(K_{m},G_{v}-v) \geq \Psi(K_{m},G)-1.
\end{equation}

\noindent Hence, the inequalities \eqref{ineq1}, \eqref{ineq2}, \eqref{ineq3}, and \eqref{ineq4} together gives $\mathrm{depth} (S/\mathcal{J}_{K_{m},G})\geq \Psi(K_{m},G)$.
\end{proof}

\begin{definition}{\rm
Let $ G $ be a connected graph and $u,v \in V(G) $ be any two vertices. Then the \textit{distance} between the vertices $ u $ and $ v $ in $ G $, denoted by $ d_{G}(u,v) $, is the length of a shortest path between $u$ and $v$ in $G$. Now, the \textit{diameter} of $G$, denoted by $\mathrm{diam}(G)$, is defined as 
$$\mathrm{diam}(G):= \mathrm{max}\{d_{G}(u,v): u,v \in V(G)\}.$$
Note that $d_{G}(u,u)=0$ for any $u\in V(G)$ and thus, if $G$ is an empty graph (i.e., $E(G)=\emptyset$), then $\mathrm{diam}(G)=0.$
}
\end{definition}

Let G be a graph with the connected components $ G_{1} $,...,$ G_{t} $. Now, we set $d(G)= i(G)+ \sum_{i=1}^{t}\mathrm{diam}(G_{i}) $, where $i(G)$ denotes the number of isolated vertices of $G$. Also, we denote the number of free vertices of $G$ by $f(G)$. In the following theorem, we show the existence of a $d$-compatible map and give a combinatorial lower bound of the depth of $S/\mathcal{J}_{K_m,G}$ using $f(G)$, $d(G)$ and the number of connected components $t$ of $G$.

\begin{theorem}\label{thmlb}
The map $\Psi: \mathcal{K} \times \mathcal{G} \rightarrow \mathbb{N}_{0} $ defined by $$\Psi(K_{m},G)= (m-2)t+f(G)+d(G)$$ is $d$-compatible. Hence, $\mathrm{depth}(S/J_{K_{m},G}) \geq (m-2)t+f(G)+d(G)$.
\end{theorem}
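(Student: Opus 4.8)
The plan is to verify that $\Psi(K_m,G) = (m-2)t + f(G) + d(G)$ satisfies the two conditions of Definition \ref{def:dcomp}; once this is done, the depth bound follows immediately from Theorem \ref{thmd-comp}. For condition (1), suppose $G = \bigsqcup_{i=1}^{t} K_{n_i}$. Then every vertex of $G$ is free, so $f(G) = \sum_{i=1}^t n_i$; also each component has diameter $\le 1$ (diameter $0$ if $n_i = 1$, i.e. the vertex is isolated, in which case it contributes $1$ to $i(G)$ instead), so $d(G) \le t$. Hence $\Psi(K_m,G) = (m-2)t + \sum n_i + d(G) \le (m-2)t + \sum n_i + t = (m-1)t + \sum n_i$, which is exactly the required inequality. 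I would spell out the small bookkeeping distinguishing isolated vertices ($n_i=1$) from edges/larger cliques carefully, since that is where $d(G) = i(G) + \sum \mathrm{diam}(G_i)$ interacts with $f(G)$.

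The substance is condition (2): given $G \ne \bigsqcup K_{n_i}$, I must produce a non-free vertex $v$ with $\Psi(K_m, G-v), \Psi(K_m, G_v) \ge \Psi(K_m,G)$ and $\Psi(K_m, G_v - v) \ge \Psi(K_m,G) - 1$. I would follow the strategy of \cite{rouzbahani2022depth}: since $G$ is not a disjoint union of cliques, some connected component $H$ of $G$ is not complete, and I would choose $v$ to be an endpoint of a diametral path (a longest shortest-path) in $H$, i.e. a vertex realizing $\mathrm{diam}(H)$; one checks such a $v$ can be taken non-free when $H$ is non-complete. The key combinatorial facts to establish are: deleting such an endpoint $v$ does not disconnect $H$ and drops the diameter of that component by at most... actually it should not increase $t$ and should not decrease $f + d$ too much — more precisely, $d(G-v) \ge d(G) - 1$ is automatic but I need the stronger balancing that the loss of $1$ in diameter is compensated by a gain of (at least) $1$ in $f(G-v)$ or that $v$ being a leaf-like endpoint of a diametral path makes its removal not cost a free vertex. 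For $G_v$ (adding all edges among $\mathcal{N}_G(v)$): this only adds edges, so components don't increase, diameters don't increase, but it may create new free vertices while possibly destroying none that matter — I need $\Psi(K_m,G_v) \ge \Psi(K_m,G)$, i.e. $f(G_v) + d(G_v) \ge f(G) + d(G)$, which should follow because $v$ itself becomes free in $G_v$ (its neighborhood is now a clique) while the diameter drop is controlled. For $G_v - v$, combining both effects, I expect exactly the loss of $1$ allowed.

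In carrying this out I would organize the argument by the three vertex operations and track the four quantities $t, i, f, \sum \mathrm{diam}$ under each, mirroring the case analysis in \cite{rouzbahani2022depth} but carrying the harmless extra term $(m-2)t$; since all three operations ($G-v$, $G_v$, $G_v - v$) leave $t$ unchanged when $v$ is chosen in a non-complete component to be a diametral endpoint (deletion of such a vertex does not disconnect its component, and edge-addition never disconnects), the $(m-2)t$ term contributes equally to both sides of every inequality and thus reduces condition (2) to exactly the statement proved in \cite{rouzbahani2022depth} for $m = 2$, namely that $\Psi_2(G) := f(G) + d(G)$ satisfies the original ($m=2$) $d$-compatibility conditions. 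The main obstacle is the careful choice of $v$ and the verification that $d$ does not drop by more than $1$ in the $G_v - v$ case while $f$ does not drop at all — this is the delicate point where one uses that $v$ is an endpoint of a longest geodesic, so that no shortest path between other vertices is routed through $v$ in an essential way, keeping diameters of the remaining graph under control. Once condition (2) is reduced to the already-known $m=2$ statement, the rest is routine, and Theorem \ref{thmd-comp} then yields $\mathrm{depth}(S/\mathcal{J}_{K_m,G}) \ge (m-2)t + f(G) + d(G)$.
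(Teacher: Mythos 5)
Your overall strategy --- verify the two conditions of Definition \ref{def:dcomp} and reduce condition (2) to the $m=2$ statement of \cite{rouzbahani2022depth} by controlling the extra term $(m-2)t$ --- is the right one and is essentially the paper's, but your implementation has a genuine gap: the vertex you propose need not exist. You want $v$ to be simultaneously non-free and an endpoint of a diametral path, so that deleting $v$ cannot disconnect its component and hence ``all three operations leave $t$ unchanged.'' In $P_3$, or any path, or a triangle with one pendant vertex, every diametral endpoint is free; the only admissible non-free vertices are interior ones, and those are cut vertices, so $t$ genuinely increases under $G\mapsto G-v$. Moreover, the inequality $f(G-v)+d(G-v)\geq f(G)+d(G)$ is established in the proof of \cite[Theorem 3.4]{rouzbahani2022depth} for the particular non-free vertex chosen there, not for an arbitrary diametral endpoint, so with your choice of $v$ you cannot simply quote it, and with their choice of $v$ you cannot assume the component count is preserved. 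As stated, your reduction therefore fails exactly in the cases where the good vertex is a cut vertex.

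The paper's proof repairs this without any new combinatorics: take the non-free vertex $v$ supplied by \cite[Theorem 3.4]{rouzbahani2022depth} (no claim that it avoids being a cut vertex), quote $f(G-v)+d(G-v)\geq f(G)+d(G)$, $d(G_v)\geq d(G)-1$, $d(G_v-v)\geq d(G)-1$, and get $f(G_v)\geq f(G)+1$, $f(G_v-v)\geq f(G)$ from Lemma \ref{lem:2}; then treat the component count separately. For $G-v$ one only needs $t'\geq t$, because $m\geq 2$ gives $(m-2)t'\geq(m-2)t$, so a possible increase in the number of components only helps; for $G_v$ and $G_v-v$ the number of components equals $t$ exactly (in $G_v$ the neighbourhood of $v$ is a clique, so removing $v$ does not disconnect), which yields conditions (2)(b) and (2)(c). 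Your handling of condition (1) is fine --- in fact $d(G)=t$ exactly for a disjoint union of complete graphs, so the required inequality holds with equality. With the vertex choice and the $(m-2)t'\geq(m-2)t$ observation corrected as above, your argument becomes the paper's proof, and the depth bound then follows from Theorem \ref{thmd-comp} as you say.
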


\begin{proof}
Let $G$ be a simple graph on $n$ vertices with $t$ connected components. Fix a complete graph $K_{m}\in\mathcal{K}$. If $G=\bigsqcup_{i=1}^{t}K_{n_{i}}$ with $n_{i}\geq 1$ for all $1\leq i\leq t$, then clearly we have $f(G)=\sum_{i=1}^{t} n_{i}$ and $d(G)=t$. Hence,
\begin{align*}
 \Psi(K_{m},G) &= (m-2)t+f(G)+d(G)\\
  & = (m-2)t+\sum_{i=1}^{t} n_{i}+t\\
  & = (m-1)t+\sum_{i=1}^{t} n_{i}.
\end{align*}
 Now, let us assume $G\neq \bigsqcup_{i=1}^{t}K_{n_{i}}$. Then, it has been shown in the proof of \cite[Theorem 3.4]{rouzbahani2022depth} that there exists a non-free vertex $v$ of $G$ such that
 \begin{equation}\label{eq5}
     f(G-v)+d(G-v) \geq f(G)+ d(G).
 \end{equation}
 Let $t'$ denote the number of connected components of $G-v$. Clearly,  $t' \geq t$ and 
 \begin{equation}\label{eq6}
     (m-2)t' \geq (m-2)t.
 \end{equation}
Therefore, from \eqref{eq5} and \eqref{eq6}, we have
\begin{align*}
 \Psi(K_{m},G-v) &=(m-2)t'+f(G-v)+d(G-v)\\
  & \geq(m-2)t+f(G)+d(G)\\
  & = \Psi(K_{m}, G).
\end{align*}
Again, from the proof of \cite[Theorem 3.4]{rouzbahani2022depth} it follows that for any non-free vertex $v$ of $G$, $d(G_v)\geq d(G)-1$ and $d(G_v-v)\geq d(G)-1$. Since $\mathrm{iv}(G)+f(G)=n$, we have $f(G_v)\geq f(G)+1$ and $f(G_v-v)\geq f(G)$ by Lemma \ref{lem:2}. Therefore,
\begin{equation}\label{eq7}
    f(G_{v})+d(G_{v}) \geq f(G)+ d(G),       
\end{equation}
\begin{equation}\label{eq8}
    f(G_{v}-v)+d(G_{v}-v) \geq f(G)+ d(G)-1.
\end{equation}
 Now, the number of connected components of $G_{v}$ is also $t$, and thus, from inequality \eqref{eq7} the following hold:
 \begin{align*}
 \Psi(K_{m},G_{v}) & =(m-2)t+f(G_{v})+d(G_{v})\\
  & \geq (m-2)t+f(G)+d(G)\\
  & = \Psi(K_{m}, G).
 \end{align*}
 Also, $v$ being a non-free vertex of $G$, the number of connected components of $G_v-v$ is also $t$, and so, inequality \eqref{eq8} gives
 \begin{align*}
     \Psi(K_{m},G_{v}-v) & =(m-2)t+f(G_{v}-v)+d(G_{v}-v)\\
  & \geq (m-2)t+f(G)+d(G)-1\\
  & = \Psi(K_{m}, G)-1.
 \end{align*}
  Therefore, the given map $\Psi$ satisfies all the conditions given in Definition \ref{def:dcomp}, and thus, $\Psi(K_{m},G)= (m-2)t+f(G)+d(G)$ is a $d$-compatible map. Hence, by Theorem \ref{thmupperbound}, $\mathrm{depth}(S/J_{K_{m},G}) \geq (m-2)t+f(G)+d(G)$.
\end{proof}

\medskip

\section{Upper Bound for depth of Generalized Binomial Edge Ideals}\label{sec4}
In this section, we give a combinatorial upper bound of $ \mathrm{depth}(S/\mathcal{J}_{K_{m},G})$ in terms of the vertex-connectivity of the graph $G$. Since the depth of the generalized binomial edge ideal of a complete graph is well-known and for disconnected graphs, the depth will be the sum of the depth of connected components, it is enough to consider $G$ is a connected non-complete graph.

\begin{definition} {\rm
Let $A$ be a polynomial ring and $I$ be an ideal of $A$. The cohomological dimension of $I$, denoted by $\mathrm{cd}(I)$, is defined as follows: 
$$ \mathrm{cd}(I):= \mathrm{max}\{ i \in \mathbb{N} : H_{I}^{i}(A)\neq 0 \},$$ 
where $H_{I}^{i}(M)$ denotes the $i$'th local cohomology module of an $A$-module $M$ with support in $I$.
}
\end{definition}

\begin{definition}{\rm
    The \textit{vertex-connectivity} of a connected graph $G$ is the minimum cardinality of a set of vertices $S\subseteq V(G)$ such that $G-S$ is disconnected. We denote the vertex-connectivity of $G$ by $\kappa(G)$. Since no set of vertices removed from a complete graph makes it disconnected, as a convention, it is assigned that $\kappa(K_n)=n-1$.
    }
\end{definition}

\begin{theorem}
[{\cite[Theorem 2.11]{katsabekis2022arithmetical}}]\label{thrm:3} Let $ G $ be a connected graph on the vertex set $ [n] $ and $ K $ be a field of any characteristic. If $ G $ is not the complete graph, then $$ \mathrm{cd}(\mathcal{J}_{K_{m},G}) \geq mn-m-n+\kappa(G),$$ 
where $\kappa(G)$ denotes the vertex-connectivity of $ G $.
\end{theorem}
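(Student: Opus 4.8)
The plan is to bound $\mathrm{cd}(\mathcal{J}_{K_m,G})$ from below by localizing at a carefully chosen prime and invoking the classical connectedness theorem for local cohomology (Grothendieck's connectedness theorem, in the sharpened form of Ogus, Hartshorne and Huneke--Lyubeznik): if $A$ is a complete local ring of dimension $d$ containing a field and $I\subseteq A$ is an ideal with $\dim(A/I)\geq 1$ whose punctured spectrum $\operatorname{Spec}(A/I)\setminus\{\mathfrak{m}_A\}$ is disconnected, then $H^{d-1}_I(A)\neq 0$, so $\mathrm{cd}(I)\geq d-1$. Crucially, only this robust direction (disconnectedness $\Rightarrow$ non-vanishing) is needed, and it holds in every characteristic. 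Since local cohomology commutes with localization and with completion (faithfully flat base change), it suffices to exhibit one prime $\mathfrak{p}\supseteq\mathcal{J}_{K_m,G}$ of height $(m-1)(n-1)+\kappa(G)$ at which the punctured spectrum of the localization is disconnected; then $\mathrm{cd}(\mathcal{J}_{K_m,G})\geq \mathrm{cd}_{S_\mathfrak{p}}\big((\mathcal{J}_{K_m,G})_\mathfrak{p}\big)\geq \mathrm{ht}(\mathfrak{p})-1=mn-m-n+\kappa(G)$.

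First I would fix a minimum vertex cut $T\subseteq[n]$ with $|T|=\kappa(G)$, so that $G-T$ has $c=c_G(T)\geq 2$ components, and set $$\mathfrak{p}:=P_\emptyset(K_m,G)+P_T(K_m,G).$$ Using $P_\emptyset(K_m,G)=\mathcal{J}_{K_m,\tilde G}=I_2(X)$ together with the observation that every $2$-minor of $X$ involving a column indexed by $T$ already lies in $(x_{ij}:i\in[m],\,j\in T)$, I would verify the identity $\mathfrak{p}=(x_{ij}:i\in[m],\,j\in T)+I_2(X_{[n]\setminus T})$, where $X_{[n]\setminus T}$ is the generic $m\times(n-\kappa)$ matrix on the remaining columns. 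Hence $\mathfrak{p}$ is prime, and the standard determinantal height count $\mathrm{ht}\,I_2(\text{generic }m\times(n-\kappa))=(m-1)(n-\kappa-1)$ gives $\mathrm{ht}(\mathfrak{p})=m\kappa+(m-1)(n-\kappa-1)=(m-1)(n-1)+\kappa$, exactly one more than the target.

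The key geometric step is to classify the minimal primes of $\mathcal{J}_{K_m,G}$ lying inside $\mathfrak{p}$ and to show they split $V(\mathcal{J}_{K_m,G})$ near $\mathfrak{p}$ into two disjoint pieces. Since $\mathfrak{p}$ is homogeneous with degree-one part spanned exactly by $\{x_{ij}:j\in T\}$, the containment $P_{T'}(K_m,G)\subseteq\mathfrak{p}$ forces $T'\subseteq T$. Because $T$ has minimum cardinality it is an inclusion-minimal separator, so for a nonempty proper $T'\subsetneq T$ the graph $G-T'$ stays connected and no vertex of $T'$ is a cut vertex of $G-(T'\setminus\{v\})$; thus the only cut sets contained in $T$ are $\emptyset$ and $T$ itself. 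Consequently exactly the two irreducible branches $V(P_\emptyset)$ and $V(P_T)$ pass through $\mathfrak{p}$, and since $P_\emptyset+P_T=\mathfrak{p}$ holds on the nose, they meet only at the closed point of $\operatorname{Spec}(S_\mathfrak{p})$. A height computation shows both branches have positive dimension in $S_\mathfrak{p}$, namely $\kappa$ and $(m-1)(c-1)$, so deleting the closed point leaves a disconnected space and the connectedness theorem applied to $\widehat{S_\mathfrak{p}}$ delivers the bound.

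The main obstacle I anticipate is the passage to the completion: one must guarantee that the two branches stay separated in the punctured spectrum of $\widehat{S_\mathfrak{p}}/\mathcal{J}_{K_m,G}\widehat{S_\mathfrak{p}}$, i.e.\ that disconnectedness survives completion. This is exactly where excellence of $S_\mathfrak{p}$ (geometric connectedness of the formal fibres) enters, and it is the technical heart of the argument. The remaining ingredients---the explicit presentation of $\mathfrak{p}$, the determinantal height counts, and the classification of cut sets inside a minimum separator---are routine once the framework is in place.
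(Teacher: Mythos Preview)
The paper does not prove this theorem; it is quoted verbatim from \cite{katsabekis2022arithmetical} and used as a black box in the proof of Theorem~\ref{thmupperbound}. So there is no ``paper's own proof'' to compare against.

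That said, your proposal is a correct proof and is in fact the standard argument for such lower bounds on cohomological dimension (and essentially what the cited reference does). The computation of $\mathfrak{p}=P_{\emptyset}(K_m,G)+P_T(K_m,G)=(x_{ij}:j\in T)+I_2(X_{[n]\setminus T})$ and its height $(m-1)(n-1)+\kappa(G)$ is right, and your combinatorial claim that the only cut sets contained in a minimum separator $T$ are $\emptyset$ and $T$ itself is correct: for any nonempty $T'\subsetneq T$ both $G-(T'\setminus\{v\})$ and $G-T'$ remain connected, so no $v\in T'$ is a cut vertex. The dimension counts $\dim S_{\mathfrak{p}}/P_{\emptyset}S_{\mathfrak{p}}=\kappa(G)\geq 1$ and $\dim S_{\mathfrak{p}}/P_{T}S_{\mathfrak{p}}=(m-1)(c-1)\geq 1$ are also correct.

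Regarding the completion issue you flag: it is not a real obstacle here. You do not need the individual branches to stay irreducible after completion; you only need the decomposition $V(\mathcal{J})=V(P_{\emptyset})\cup V(P_T)$ with $V(P_{\emptyset})\cap V(P_T)=\{\mathfrak{m}\}$, and this is preserved because it is an ideal-theoretic statement ($P_{\emptyset}\cap P_T=\mathcal{J}$ locally, $P_{\emptyset}+P_T=\mathfrak{m}$), together with $\dim \widehat{S_{\mathfrak{p}}}/P_{\emptyset}\widehat{S_{\mathfrak{p}}}=\dim S_{\mathfrak{p}}/P_{\emptyset}S_{\mathfrak{p}}>0$ and similarly for $P_T$. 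Alternatively, you can bypass the connectedness theorem altogether and use the Mayer--Vietoris sequence for local cohomology directly: with $A=\widehat{S_{\mathfrak{p}}}$ (a complete regular local ring, hence a domain) and $d=\mathrm{ht}(\mathfrak{p})$, the exact sequence
\[
H^{d-1}_{\mathcal{J}}(A)\longrightarrow H^{d}_{\mathfrak{m}}(A)\longrightarrow H^{d}_{P_{\emptyset}}(A)\oplus H^{d}_{P_T}(A)
\]
has nonzero middle term by Grothendieck non-vanishing and zero right-hand term by the Hartshorne--Lichtenbaum vanishing theorem (valid in every characteristic), forcing $H^{d-1}_{\mathcal{J}}(A)\neq 0$.
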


\begin{proposition}[{\cite[Proposition 4.1]{peskine1973dimension}}]\label{propcdpd}
If $A$ is a polynomial ring over a field of characteristic $p>0$, then $ \mathrm{cd}(I)\leq \mathrm{pd}_{A}(A/I) $.
\end{proposition}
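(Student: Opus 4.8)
The plan is to deduce this inequality from the positive-characteristic description of local cohomology via the Frobenius endomorphism, which is the heart of the Peskine--Szpiro method. Recall first that for any Noetherian ring $A$ and ideal $I$ one has $H^{i}_{I}(A)=\varinjlim_{t}\mathrm{Ext}^{i}_{A}(A/I^{t},A)$, so that $\mathrm{cd}(I)=\max\{i:H^{i}_{I}(A)\neq 0\}$. Since $A$ is a polynomial ring, $\mathrm{pd}_{A}(A/I)$ is finite by Hilbert's syzygy theorem, so it suffices to prove that $\mathrm{Ext}^{i}_{A}(A/I,A)=0$ for $i>\mathrm{pd}_{A}(A/I)$ forces $H^{i}_{I}(A)=0$. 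The strategy is to rewrite $H^{i}_{I}(A)$ as a colimit of Frobenius twists of the single module $\mathrm{Ext}^{i}_{A}(A/I,A)$.

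First I would introduce the Frobenius functor $F$ on finitely generated $A$-modules, $F(M)=A\otimes_{A}M$, where $A$ acts on the left factor through the $p$-th power map. The two properties I need are: (i) $F(A/I)=A/I^{[p]}$, where $I^{[p^{e}]}=(f^{p^{e}}:f\in I)$ denotes the $e$-th Frobenius power of $I$; and, crucially, (ii) $F$ is exact on finitely generated $A$-modules. Property (ii) is Kunz's theorem: since the polynomial ring $A$ is regular, the Frobenius map $A\to A$ is flat. Exactness of $F$ lets me apply $F^{e}$ to a finite free resolution $P_{\bullet}\to A/I$ and conclude that $F^{e}P_{\bullet}\to A/I^{[p^{e}]}$ is again a finite free resolution. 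Because $F^{e}$ commutes with $\mathrm{Hom}_{A}(-,A)$ on finite free modules and is exact, dualizing this resolution and taking cohomology yields the isomorphism $\mathrm{Ext}^{i}_{A}(A/I^{[p^{e}]},A)\cong F^{e}\big(\mathrm{Ext}^{i}_{A}(A/I,A)\big)$ for all $i$ and all $e\geq 0$.

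Next I would observe that the ordinary powers $\{I^{t}\}_{t}$ and the Frobenius powers $\{I^{[p^{e}]}\}_{e}$ are cofinal chains of ideals: if $I$ is generated by $r$ elements, then $I^{[q]}\subseteq I^{q}$ and $I^{\,r(q-1)+1}\subseteq I^{[q]}$ for every $q=p^{e}$ (the first by expanding generators, the second by a pigeonhole argument on monomials in the generators). Consequently $H^{i}_{I}(A)=\varinjlim_{e}\mathrm{Ext}^{i}_{A}(A/I^{[p^{e}]},A)=\varinjlim_{e}F^{e}\big(\mathrm{Ext}^{i}_{A}(A/I,A)\big)$, the transition maps being induced by the natural surjections $A/I^{[p^{e+1}]}\to A/I^{[p^{e}]}$. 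Now if $i>\mathrm{pd}_{A}(A/I)$, then $\mathrm{Ext}^{i}_{A}(A/I,A)=0$, hence every term $F^{e}\big(\mathrm{Ext}^{i}_{A}(A/I,A)\big)$ in this colimit is zero, hence $H^{i}_{I}(A)=0$. This gives $\mathrm{cd}(I)\leq\mathrm{pd}_{A}(A/I)$, as desired.

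The main obstacle—really the single essential input—is the exactness of the Frobenius functor, i.e.\ flatness of Frobenius, which is precisely Kunz's characterization of regularity in characteristic $p$; this is exactly where both hypotheses ``$A$ a polynomial ring'' and ``$\operatorname{char}K=p>0$'' are used, and the argument has no analogue over a singular ring or in characteristic zero. A minor technical point to take care of is that $A$ is a (graded) global ring rather than local: one should run the argument either with graded local cohomology and graded $\mathrm{Ext}$ (Hilbert's syzygy theorem still supplies a finite graded free resolution), or simply note that $H^{i}_{I}$, $\mathrm{Ext}^{i}_{A}$, $\mathrm{pd}_{A}$ and $F$ all commute with localization, so the statement reduces to the local case.
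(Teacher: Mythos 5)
Your argument is correct: it is precisely the classical Peskine--Szpiro proof, rewriting $H^i_I(A)=\varinjlim_e \mathrm{Ext}^i_A(A/I^{[p^e]},A)\cong\varinjlim_e F^e\bigl(\mathrm{Ext}^i_A(A/I,A)\bigr)$ via flatness of Frobenius over the regular ring $A$ and cofinality of Frobenius powers with ordinary powers. The paper itself gives no proof, quoting the result directly from \cite{peskine1973dimension}, and your reconstruction matches the argument in that reference.
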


\begin{theorem}\label{thmupperbound}
Let $G$ be a non-complete connected graph on $n$ vertices. If the vertex-connectivity of $G$ is $\kappa(G)$, then 
$$\mathrm{depth} (S/\mathcal{J}_{K_{m},G})\leq m+n-\kappa(G).$$
\end{theorem}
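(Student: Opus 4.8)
The plan is to establish the bound in positive characteristic first and then descend to characteristic zero by a standard reduction-mod-$p$ argument, exactly as the authors announce in the introduction. The key inputs are already laid out in the excerpt: Theorem \ref{thrm:3} of Katsabekis, which gives $\mathrm{cd}(\mathcal{J}_{K_m,G}) \geq mn - m - n + \kappa(G)$ for a connected non-complete graph $G$ over a field of any characteristic, and Proposition \ref{propcdpd} of Peskine--Szpiro, which says $\mathrm{cd}(I) \leq \mathrm{pd}_A(A/I)$ when $A$ is a polynomial ring over a field of positive characteristic. Here $A = S = K[x_{ij} : i \in [m], j \in [n]]$ has $mn$ variables.

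First I would fix $K$ of characteristic $p > 0$. Combining the two cited results gives
$$\mathrm{pd}_S(S/\mathcal{J}_{K_m,G}) \geq \mathrm{cd}(\mathcal{J}_{K_m,G}) \geq mn - m - n + \kappa(G).$$
Then I would apply the Auslander--Buchsbaum formula, noting that $\mathrm{depth}(S) = mn$ (localizing at $\mathfrak{m}$ is harmless since everything is graded), to get
$$\mathrm{depth}(S/\mathcal{J}_{K_m,G}) = mn - \mathrm{pd}_S(S/\mathcal{J}_{K_m,G}) \leq mn - (mn - m - n + \kappa(G)) = m + n - \kappa(G).$$
This settles the positive-characteristic case.

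For characteristic zero, I would argue by semicontinuity of depth under reduction modulo $p$. The ideal $\mathcal{J}_{K_m,G}$ is defined over $\mathbb{Z}$ — its generators are the $2$-minors $x_{ik}x_{jl} - x_{il}x_{jk}$ with integer coefficients — so one has a flat family over $\mathrm{Spec}(\mathbb{Z})$ (or over a suitable affine piece after inverting finitely many primes; flatness over a dense open of $\mathrm{Spec}\,\mathbb{Z}$ holds because $\mathbb{Z}$ is a PID and the Hilbert function is constant in a neighborhood, as $\mathcal{J}_{K_m,G}$ is radical with a fixed combinatorial primary decomposition by \cite[Theorem 3.7]{rauh13}). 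Standard results (e.g. via the behavior of the graded Betti numbers, which can only grow under specialization to $\mathbb{F}_p$ for $p$ in a dense set, equivalently $\mathrm{pd}$ can only increase) then give $\mathrm{pd}_{S_{\mathbb{Q}}}(S_{\mathbb{Q}}/\mathcal{J}_{K_m,G}) \leq \mathrm{pd}_{S_{\mathbb{F}_p}}(S_{\mathbb{F}_p}/\mathcal{J}_{K_m,G})$ for infinitely many $p$, hence $\mathrm{depth}$ can only drop, so the characteristic-$0$ depth is at least the characteristic-$p$ depth; but we want an upper bound, so I would instead use that $\mathrm{depth}$ in characteristic zero is bounded above by $\mathrm{depth}$ in characteristic $p$ for the generic $p$ — more carefully, the correct direction is that $\beta_{i,j}$ in characteristic $0$ is $\leq \beta_{i,j}$ in characteristic $p$, giving $\mathrm{pd}_{\mathbb{Q}} \leq \mathrm{pd}_{\mathbb{F}_p}$, hence $\mathrm{depth}_{\mathbb{Q}} \geq \mathrm{depth}_{\mathbb{F}_p}$; combined with the already-established $\mathrm{depth}_{\mathbb{F}_p} \leq m+n-\kappa(G)$ this does not immediately conclude. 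Therefore the honest route in characteristic zero is to observe that $\mathrm{cd}(\mathcal{J}_{K_m,G}) \leq \mathrm{pd}$ still holds here as well: over any field, by a theorem of Lyubeznik (in characteristic $0$) or directly since $\mathrm{cd}(I) = \mathrm{pd}$ bounds coincide for the local cohomological dimension, we get $\mathrm{pd}_{S_{\mathbb{Q}}}(S_{\mathbb{Q}}/\mathcal{J}_{K_m,G}) \geq \mathrm{cd}(\mathcal{J}_{K_m,G}) \geq mn - m - n + \kappa(G)$ using Theorem \ref{thrm:3} (which is stated for fields of any characteristic), and Auslander--Buchsbaum again yields $\mathrm{depth} \leq m+n-\kappa(G)$.

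The main obstacle is the characteristic-zero inequality $\mathrm{cd}(I) \leq \mathrm{pd}_A(A/I)$: Proposition \ref{propcdpd} as cited is only for positive characteristic, so I cannot simply invoke it over $\mathbb{Q}$. The resolution is the reduction-to-positive-characteristic technique the authors flag in the introduction: for $\mathfrak{m}$ the graded maximal ideal, $\mathrm{pd}$ and the top nonvanishing local cohomology $H^{mn - \mathrm{depth}}_{\mathfrak{m}}$ are controlled by Betti numbers, and Betti numbers over $\mathbb{Q}$ are bounded above by those over $\mathbb{F}_p$ for all but finitely many $p$ (semicontinuity of $\beta_{i,j}$ in a flat family over $\mathrm{Spec}\,\mathbb{Z}$), so $\mathrm{pd}_{S_{\mathbb{Q}}}(S_{\mathbb{Q}}/\mathcal{J}_{K_m,G}) \leq \mathrm{pd}_{S_{\mathbb{F}_p}}(S_{\mathbb{F}_p}/\mathcal{J}_{K_m,G})$. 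Hmm — this gives the wrong inequality for an upper bound on depth. So the cleanest correct argument is the reverse: one shows $\mathrm{depth}_{\mathbb{Q}} \leq \mathrm{depth}_{\mathbb{F}_p}$ fails in general, but here the combinatorial primary decomposition of $\mathcal{J}_{K_m,G}$ is characteristic-independent, and in fact the stronger statement is that $\mathrm{cd}$ is independent of characteristic in this setting because the local cohomology modules $H^i_{\mathcal{J}}(S)$ in the flat $\mathbb{Z}$-family have ranks that are upper-semicontinuous; combined with Theorem \ref{thrm:3} holding over every field, the generic rank over $\mathbb{Q}$ is at least $mn - m - n + \kappa(G)$, and then $\mathrm{cd}_{\mathbb{Q}}(\mathcal{J}_{K_m,G}) \geq mn-m-n+\kappa(G)$, while Hartshorne--Lichtenbaum / arithmetic-rank considerations give $\mathrm{cd} \leq \mathrm{pd}$ over $\mathbb{Q}$ as well (this last inequality holds over any Noetherian ring of finite Krull dimension by the \v{C}ech complex computing $H^i_I$ from a free resolution when $A$ is regular — indeed $\mathrm{cd}(I) \leq \mathrm{pd}_A(A/I)$ is true over any regular ring, see \cite{peskine1973dimension} and its refinements). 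I would therefore either cite the general form of Peskine--Szpiro valid in all characteristics, or carry out the $p$-reduction on $\mathrm{cd}$ directly; this is the only delicate point, and once it is in hand the theorem follows from Auslander--Buchsbaum in one line.
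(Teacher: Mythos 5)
Your positive-characteristic argument is exactly the paper's: combine Theorem \ref{thrm:3} with Proposition \ref{propcdpd} to get $\mathrm{pd}_S(S/\mathcal{J}_{K_m,G})\geq mn-m-n+\kappa(G)$ and apply Auslander--Buchsbaum. The gap is in your characteristic-zero step. The claim you ultimately rest on --- that $\mathrm{cd}(I)\leq \mathrm{pd}_A(A/I)$ holds over a regular ring in characteristic $0$ (``a theorem of Lyubeznik'', or ``true over any regular ring'') --- is false. The inequality of Peskine--Szpiro is genuinely a positive-characteristic phenomenon (it uses the Frobenius); the standard counterexample is the ideal $I$ of $2$-minors of a generic $2\times 3$ matrix, for which $A/I$ is Cohen--Macaulay with $\mathrm{pd}_A(A/I)=2$ in every characteristic, yet $H^3_I(A)\neq 0$ in characteristic zero, so $\mathrm{cd}(I)=3>2=\mathrm{pd}_A(A/I)$. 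This example is of precisely the determinantal type occurring here, so one cannot wave it away; there is no ``general form of Peskine--Szpiro valid in all characteristics''. Your alternative sketch (``$\mathrm{cd}$ is independent of characteristic in this setting because the ranks of $H^i_{\mathcal{J}}(S)$ are upper-semicontinuous'') is also unsupported: local cohomology modules are not finitely generated, and $\mathrm{cd}$ really can drop when passing from characteristic $0$ to characteristic $p$, as the same example shows.

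The way the paper closes the characteristic-zero case is different and is the point you are missing: after reducing to $K=\mathbb{Q}$ (graded Betti numbers, hence depth, are unchanged under field extension), one takes the $\mathbb{Z}$-form $J_{K_m,G}\subseteq R=\mathbb{Z}[x_{ij}]$ and invokes the standard reduction-mod-$p$ result (\cite[Theorem 2.3.5]{hhunpublished}) that for all $p\gg 0$ the projective dimension over $\mathbb{Q}$ \emph{equals} the projective dimension over $\mathbb{F}_p$ --- not merely the one-sided semicontinuity $\mathrm{pd}_{\mathbb{Q}}\leq \mathrm{pd}_{\mathbb{F}_p}$ that you correctly observed goes the wrong way. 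Since the characteristic-$p$ argument gives $\mathrm{pd}_{\mathbb{F}_p}\geq mn-m-n+\kappa(G)$ for every prime $p$, the equality for large $p$ transfers this lower bound on projective dimension to $\mathbb{Q}$, and Auslander--Buchsbaum finishes as before. So the theorem is correct and your overall strategy is the intended one, but as written the characteristic-zero case rests on a false statement and needs to be replaced by the equality of Betti numbers (or of $\mathrm{pd}$) under reduction modulo large primes.
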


\begin{proof}
First, let us assume $\mathrm{char}(K)=p>0$. Then, by Theorem \ref{thrm:3} and Proposition\ref{propcdpd}, we have
$$ \mathrm{pd}(S/\mathcal{J}_{K_{m},G})\geq \mathrm{cd}( \mathcal{J}_{K_{m},G}) \geq mn-m-n+\kappa(G).$$
Therefore, by Auslander-Buchsbaum Formula,
\begin{align*}
\mathrm{depth}(S/\mathcal{J}_{K_{m},G}) & = \mathrm{depth}(S) - \mathrm{pd}(S/\mathcal{J}_{K_{m},G}) \\
& \leq mn-mn+m+n-\kappa(G) \\
& = m+n-\kappa(G).
\end{align*}
Now, let $K$ be any field with $\mathrm{char}(K)=0$. Since projective dimension is preserved under faithfully flat extension, so is depth by Auslander-Buchsbaum Formula. Therefore, without loss of generality, it is enough to assume $K=\mathbb{Q}$. Let $R=\mathbb{Z}[x_{i,j}: i\in[m]\,\,\text{and}\,\,j\in[n]]$ and $J_{K_m,G}$ be the ideal in $R$ generated by all binomial of the form $x_{ik}x_{jl}-x_{il}x_{jk}$, where $\{i,j\}\in E(K_m)$ with $i<j$ and $\{k,l\}\in E(G)$ with $k<l$. Then, due to \cite[Theorem 2.3.5]{hhunpublished}, we get for $p>>0$ 
$$\mathrm{pd}_{S}(S/\mathcal{J}_{Km,G})=\mathrm{pd}_{R\otimes_{\mathbb{Z}}\mathbb{Q}}(R\otimes_{\mathbb{Z}} \mathbb{Q}/J_{K_m,G}\otimes_{\mathbb{Z}}\mathbb{Q})=\mathrm{pd}_{R\otimes_{\mathbb{Z}}\mathbb{F}_{p}}(R/J_{K_m,G}\otimes_{\mathbb{Z}}\mathbb{F}_{p}).$$
Hence, from the proof of characteristic $p>0$ case and Auslander-Buchsbaum Formula, we get the desired result. 
\end{proof}

\section{Examples}\label{sec5}
In this section, first we provide a family of graphs for which the lower bound and upper bound of the depth of their generalized binomial edge ideals are equal. We calculate the depth of generalized binomial edge ideals of cyclic graphs. As a remark, we get an infinite class of graphs for which the upper bound is tight and arbitrarily larger than the lower bound. Also, we give an infinite class of graphs for which the lower bound is tight and arbitrarily smaller than the upper bound. Finally, we discuss the depth of generalized binomial edge ideals of those graphs whose binomial edge ideals are Cohen-Macaulay.
\medskip

Let $\mathcal{H}_1$ be the class of non-complete connected graphs $G$, for which there exists a complete graph $K_s$ such that any two maximal cliques of $G$ intersect at $K_s$. A graph in this class is illustrated in \Cref{figh1}. 

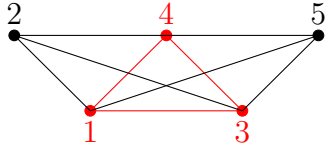
\begin{figure}[h]

\begin{tikzpicture}
\filldraw[red] (0,0) circle (2pt)node[anchor=north]{$1$};
\filldraw[red] (2,0) circle (2pt)node[anchor=north]{$3$};
\filldraw[red] (1,1) circle (2pt)node[anchor=south]{$4$};
\filldraw[black] (-1,1) circle (2pt)node[anchor=south]{$2$};
\filldraw[black] (3,1) circle (2pt)node[anchor=south]{$5$};

\draw[red] (0,0) -- (2,0) -- (1,1) -- cycle;
\draw[black] (0,0) -- (-1,1);
\draw[black] (2,0) -- (-1,1);
\draw[black] (1,1) -- (-1,1);
\draw[black] (2,0) -- (3,1);
\draw[black] (0,0) -- (3,1);
\draw[black] (1,1) -- (3,1);

\end{tikzpicture}
\caption{A graph $G\in \mathcal{H}_1$ with $K_s=K_3=G- \{2,5\}$.}
\label{figh1}
\end{figure}

\begin{theorem}\label{thmub=lb}
    Let $G\in \mathcal{H}_1$ be a graph on $n$ vertices. Then
    $$\mathrm{depth}(S/\mathcal{J}_{K_m,G})=(m-2)+f(G)+d(G)=m+n-\kappa(G).$$
\end{theorem}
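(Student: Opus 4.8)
The plan is to prove the two claimed equalities by squeezing $\mathrm{depth}(S/\mathcal{J}_{K_m,G})$ between the lower bound of Theorem \ref{thmlb} and the upper bound of Theorem \ref{thmupperbound}, after first computing both bounds explicitly for a graph $G\in\mathcal{H}_1$. So the first step is purely combinatorial: for $G\in\mathcal{H}_1$ with common intersection clique $K_s$, I would identify the structure of $G$. Since any two maximal cliques meet exactly in a fixed $K_s$, the $s$ vertices of $K_s$ are free (their neighbourhoods are unions of cliques glued along themselves, hence complete — this needs a small check), and every other vertex lies in exactly one maximal clique and is also free there; in fact every vertex is free, so $f(G)=n$. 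Wait — that cannot be right if $G$ is non-complete, so more carefully: a vertex $w$ outside $K_s$ in a maximal clique $F$ has $\mathcal{N}_G(w)=F\setminus\{w\}$ together with possibly nothing else, so $w$ is free; a vertex in $K_s$ has neighbourhood equal to the union of all the $F_i\setminus\{v\}$, which is \emph{not} complete once there are two maximal cliques and $s$ is small, so the vertices of $K_s$ are the non-free ones. Thus $f(G)=n-s$ if $s<n$ minus the number of maximal cliques being $\geq 2$; I would pin this down precisely. Next, $G$ is connected with one component so $t=1$, and $\mathrm{diam}(G)=2$ (any two vertices in different maximal cliques are joined through $K_s$, and $G$ is non-complete so the diameter is exactly $2$), giving $d(G)=2$. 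Hence the lower bound is $(m-2)+f(G)+2=m+f(G)$.

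The second step is to compute $\kappa(G)$. The set $K_s$ (viewed as a vertex set of size $s$) is a cut set: removing it disconnects the distinct maximal cliques from one another, provided there are at least two maximal cliques each strictly larger than $K_s$ — which holds since $G$ is non-complete. And no smaller set disconnects $G$: any vertex set $T$ with $|T|<s$ leaves at least one vertex of $K_s$ in place, and that vertex is adjacent to everything else remaining, so $G-T$ is connected. Therefore $\kappa(G)=s$, and the upper bound is $m+n-s$. Now I reconcile: $f(G)=n-s$ would give lower bound $=m+n-s=$ upper bound, and the theorem's statement $(m-2)+f(G)+d(G)=m+n-\kappa(G)$ becomes $m+f(G)=m+n-s$, i.e. $f(G)=n-s$. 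So the correct combinatorial fact to establish is precisely $f(G)=n-s$: exactly the $s$ vertices of $K_s$ are non-free, every other vertex is free. This is the content I would verify carefully in the first paragraph of the actual proof.

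Once both bounds are shown equal to $m+n-s$, Theorem \ref{thmlb} gives $\mathrm{depth}(S/\mathcal{J}_{K_m,G})\geq m+n-s$ and Theorem \ref{thmupperbound} gives $\mathrm{depth}(S/\mathcal{J}_{K_m,G})\leq m+n-\kappa(G)=m+n-s$, so equality holds throughout and we are done. I would also double-check the degenerate cases: if $s=0$ then the maximal cliques are pairwise disjoint yet $G$ is connected and non-complete, which forces a more careful reading of the definition of $\mathcal{H}_1$ (the example in \Cref{figh1} has $s=3>0$); I would note that $s\geq 1$ must be assumed or is forced, since disjoint maximal cliques in a connected graph is impossible unless there is only one clique, contradicting non-completeness.

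The main obstacle is the first paragraph: correctly proving $f(G)=n-s$, i.e. that every vertex outside $K_s$ is free and every vertex of $K_s$ is non-free. Freeness of the outside vertices requires knowing that such a vertex lies in a \emph{unique} maximal clique, which follows from the defining intersection property of $\mathcal{H}_1$ (if $w$ were in two maximal cliques $F_i,F_j$, then $w\in F_i\cap F_j=K_s$). Non-freeness of a vertex $v\in K_s$ requires exhibiting two neighbours of $v$, lying in different maximal cliques, that are non-adjacent — again guaranteed by the intersection property together with non-completeness of $G$. Everything after that (the diameter computation, the vertex-connectivity computation, and the final squeeze) is routine. I would present the proof as: (i) structural lemma $f(G)=n-s$, $d(G)=2$, $t=1$; (ii) $\kappa(G)=s$; (iii) apply Theorems \ref{thmlb} and \ref{thmupperbound} and conclude.
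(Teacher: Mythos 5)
Your proposal is correct and follows essentially the same route as the paper's proof: establish $f(G)=n-s$, $d(G)=2$, $t=1$, and $\kappa(G)=s$, then squeeze $\mathrm{depth}(S/\mathcal{J}_{K_m,G})$ between the lower bound of Theorem \ref{thmlb} and the upper bound of Theorem \ref{thmupperbound}, both equal to $m+n-s$. Your extra care in verifying which vertices are free and that $s\geq 1$ is forced only makes explicit what the paper asserts more briefly.
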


\begin{proof}
Let any two maximal cliques of $G$ intersect at a complete graph $K_s$ on $s$ vertices. Then, to disconnect the graph $G$, one has to remove at least the vertices of the complete graph where any two maximal cliques intersect. Since $G$ is non-complete, removal of the vertices of $K_s$ is enough to disconnect $G$. Thus, the vertex-connectivity of $G$ is $\kappa(G)=s$. Also, note that the only non-free vertices of $G$ are the vertices of $K_s$. Thus, $f(G)=n-s$. Again, it is easy to verify that $d_{G}(u,v)=0$ whenever $u,v$ belongs to the same maximal clique and $d_{G}(u,v)=2$ whenever $u,v$ belong to distinct maximal cliques. Therefore, $d(G)=2$. Since the graph $G$ is connected, $t=1$. By Theorems \ref{thmlb} and \ref{thmupperbound}, 
\begin{align*}
\mathrm{depth}(S/\mathcal{J}_{K_m,G}) & \geq (m-2)t+f(G)+d(G)\\
&= m-2+n-s+2\\
&= m+n-s\\
&= m+n-\kappa(G)\\
&\geq \mathrm{depth}(S/\mathcal{J}_{K_m,G}).
\end{align*}
Hence, the assertion follows.
\end{proof}

\begin{definition}[{\cite[Definition 3.4]{ass23}}]\label{defcomp} {\rm
    Let $G$ be a simple graph. For a set $V=\{v_{1},\ldots,v_{k}\}\subseteq V(G)$, we write $G_{V}=G_{v_{1} v_{2}\cdots v_{k}}:=(\ldots ((G_{v_{1}})_{v_{2}})\ldots)_{v_{k}}$. Due to \cite[Proposition 3.2]{ass23}, $G_{V}$ does not depend on the ordering of the elements of $V$, and thus, the definition of $G_{V}$ is well-defined. A set $W\subseteq V(G)$ is said to be a \textit{completion set} of $G$ if $G_{W}$ is a disjoint union of complete graphs.
    }
\end{definition}

Let $\mathcal{H}_2$ be the class of those graphs $G$ such that either $G$ is a cycle $C_n$ or $G=(C_n)_W$ for some set of vertices $W\subseteq V(C_n)$ (see \Cref{figh2}).

\begin{figure}[h]
\begin{tikzpicture}

\filldraw[black] (0,0) circle (2pt)node[anchor=north]{$1$};
\filldraw[black] (2,0) circle (2pt)node[anchor=north]{$2$};
\filldraw[black] (-1,1.5) circle (2pt)node[anchor=east]{$6$};
\filldraw[black] (3,1.5) circle (2pt)node[anchor=west]{$3$};
\filldraw[black] (0,3) circle (2pt)node[anchor=south]{$5$};
\filldraw[black] (2,3) circle (2pt)node[anchor=south]{$4$};

\draw[black] (0,0) -- (-1,1.5) -- (0,3) -- (2,3) -- (3,1.5) -- (2,0) --(0,0);
\draw[black] (2,0) -- (-1,1.5);
\draw[black] (0,0) -- (3,1.5);
\draw[black] (3,1.5) -- (-1,1.5);
\draw[black] (0,3) -- (3,1.5);

\end{tikzpicture}
\caption{The graph $G\in\mathcal{H}_{2}$ such that $G=(((C_6)_{1})_{2})_{4}$.}
\label{figh2}
\end{figure}
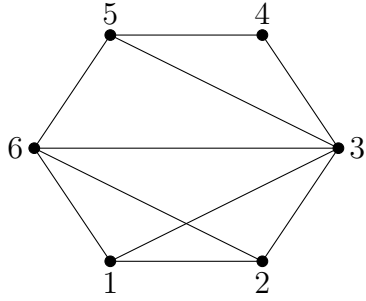

\begin{theorem}\label{thmdepthcycle}
Let $G \in \mathcal{H}_2$ be a non-complete graph. Then $\kappa(G)=2$, and $$\mathrm{depth}(S/\mathcal{J}_{K_{m},G})= m+n-2.$$ 
In particular, for any cycle $C_n$ of length $n\geq 4$, $\mathrm{depth}(S/\mathcal{J}_{K_{m},C_n})= m+n-2$.
\end{theorem}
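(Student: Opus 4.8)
The plan is to prove the two inequalities $\mathrm{depth}(S/\mathcal{J}_{K_m,G})\le m+n-2$ and $\mathrm{depth}(S/\mathcal{J}_{K_m,G})\ge m+n-2$ separately. The upper bound is immediate from Theorem \ref{thmupperbound} once one checks that $\kappa(G)=2$: every $G\in\mathcal{H}_2$ contains the $2$-connected cycle $C_n$ as a spanning subgraph (each operation $(-)_v$ only adds edges), so $\kappa(G)\ge 2$ and $G$ is $2$-connected; for $\kappa(G)\le 2$ one exhibits an explicit $2$-element cut of the non-complete graph $G=(C_n)_W$ --- if some cycle vertex $v$ together with its two $C_n$-neighbours avoids $W$ then $v$ has degree $2$ in $G$ and $\{v-1,v+1\}$ is a cut, and otherwise an analysis of which chords the completion process adds shows that two vertices of the ``core cycle'' of $G$ separate it. Thus $\mathrm{depth}(S/\mathcal{J}_{K_m,G})\le m+n-2$.

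For the lower bound I would induct on $\mathrm{iv}(G)$. If $\mathrm{iv}(G)=0$ then $G$, being connected, equals $K_n$ and Theorem \ref{thrm:1} gives $\mathrm{depth}=m+n-1$. If $\mathrm{iv}(G)\ge1$, choose a non-free vertex $v$; note $v\notin W$, because every $w\in W$ is free in $(C_n)_W=((C_n)_{W\setminus\{w\}})_w$ (the operation $(-)_w$ leaves $N(w)$ unchanged). Theorem \ref{thrm:2} then gives the short exact sequence
$$0\longrightarrow S/\mathcal{J}_{K_m,G}\longrightarrow S/\mathcal{J}_{K_m,G_v}\oplus S_v/\mathcal{J}_{K_m,G-v}\longrightarrow S_v/\mathcal{J}_{K_m,G_v-v}\longrightarrow 0,$$
and since $G_v$ is again $2$-connected, $G-v$ and $G_v-v$ are connected; by the Depth Lemma, $\mathrm{depth}(S/\mathcal{J}_{K_m,G})\ge\min\{\mathrm{depth}(S/\mathcal{J}_{K_m,G_v}),\ \mathrm{depth}(S_v/\mathcal{J}_{K_m,G-v}),\ \mathrm{depth}(S_v/\mathcal{J}_{K_m,G_v-v})+1\}$. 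Now $G_v=(C_n)_{W\cup\{v\}}$ lies in $\mathcal{H}_2$ (or equals $K_n$) and has $\mathrm{iv}(G_v)<\mathrm{iv}(G)$ by Lemma \ref{lem:2}, so the inductive hypothesis (or Theorem \ref{thrm:1}) gives $\mathrm{depth}(S/\mathcal{J}_{K_m,G_v})\ge m+n-2$. The heart of the argument is the structural claim that deleting the non-free (``core'') vertex $v$ turns $(C_n)_W$ into a connected \emph{block graph} $G-v$ on $n-1$ vertices, so that $\mathrm{depth}(S_v/\mathcal{J}_{K_m,G-v})=m+(n-1)-1=m+n-2$ by Theorem \ref{thrm:1}. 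Finally $G_v-v$ is obtained by deleting the simplicial vertex $v$ from $(C_n)_{W\cup\{v\}}$, hence is a connected graph in $\mathcal{H}_2\cup\{\text{block graphs}\}$ on $n-1$ vertices with $\mathrm{iv}(G_v-v)<\mathrm{iv}(G)$, so $\mathrm{depth}(S_v/\mathcal{J}_{K_m,G_v-v})\ge m+(n-1)-2=m+n-3$ again by the inductive hypothesis or Theorem \ref{thrm:1}. Plugging these into the displayed inequality gives $\mathrm{depth}(S/\mathcal{J}_{K_m,G})\ge m+n-2$, and the ``in particular'' statement is the special case $W=\emptyset$.

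I expect the main obstacle to be the structural claim: for a non-free vertex $v$, that $G-v$ is a block graph and that $G_v-v$ stays within $\mathcal{H}_2\cup\{\text{block graphs}\}$. The underlying picture is that the completion process applied to a cycle yields a graph whose only non-trivial $2$-connected part is again a single cycle with cliques attached along it, so that removing a vertex of that cycle breaks it into a tree of cliques (a block graph), whereas removing a ``pendant'' clique vertex produces a strictly smaller graph of the same shape. Establishing this rigorously amounts to tracking how the neighbourhoods --- and therefore the chords added --- evolve under several successive operations $(-)_w$, which is subtle exactly when two cycle-consecutive vertices both belong to $W$; once this bookkeeping is done, the depth computation itself is a routine application of Theorems \ref{thrm:2} and \ref{thrm:1}, Lemma \ref{lem:2} and the Depth Lemma. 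It is worth noting that the general lower bound of Theorem \ref{thmlb} gives only $(m-2)+\lfloor n/2\rfloor$ for $C_n$, far below $m+n-2$, which is why a separate argument --- the ``new technique'' mentioned in the introduction --- is required here.
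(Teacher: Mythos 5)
Your proposal is correct and follows essentially the same route as the paper: $\kappa(G)=2$ plus Theorem \ref{thmupperbound} for the upper bound, and induction on $\mathrm{iv}(G)$ using the short exact sequence from Theorem \ref{thrm:2}, the Depth Lemma, the fact that $G-v$ is a connected block graph (so Theorem \ref{thrm:1} applies), and the induction hypothesis for $G_v$ and $G_v-v$. The structural claim you flag as the main obstacle is exactly what the paper also asserts with minimal justification (it simply notes that $G-v$ is a block graph and that $G_v$, $G_v-v$ remain complete or in $\mathcal{H}_2$); the only other difference is that the paper settles the base case $\mathrm{iv}(G)=2$ directly by comparing the general lower bound of Theorem \ref{thmlb} with the upper bound, rather than starting from complete graphs as you do.
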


\begin{proof}
First, let us prove $\kappa(G)=2$ for all non-complete $G\in\mathcal{H}_2$. Let $V(G)=[n]$. Since $G$ is non-complete, there exists a non-free vertex $v\in V(G)$. Without loss of generality, we assume $v=1$. If $G$ is a cycle, then we are done. Suppose $G=(C_n)_W$ for some $W\subseteq [n]$. Then $1\not\in W$. If $2,n\not\in W$, then $G-\{2,n\}$ is a disconnected graph and hence, $\kappa(G)=2$. If one of $2$ and $n$ belong to $W$, then note that there exists a non-free vertex $i\in\mathcal{N}_{G}(1)\setminus \{2,n\}$, otherwise $G$ would be complete. Now, from the structure of graphs in $\mathcal{H}_2$, one can check that removal of $\{1,i\}$ from $G$ disconnect $G$. Thus, $\kappa(G)=2$.\par 
Now, we prove the formula of depth by induction on the number of non-free vertices $\mathrm{iv}(G)$ of $G$. Since $G$ is non-complete, $\mathrm{iv}(G)>0$. Suppose $\mathrm{iv}(G)=1$. Then $f(G)=n-1$ and thus, $G\simeq (C_{n})_{1\ldots n-1}$. But, from Definition \ref{defcomp}, it is easy to verify that any consecutive $n-2$ vertices of $C_n$ form a completion set of $C_n$. Thus, $G$ should be a complete graph, which is a contradiction to the fact that $\mathrm{iv}(G)=1$. So, the base case is when $\mathrm{iv}(G)=2$. In this case, we have $f(G)=n-2$. Since $G$ is non-complete, $d(G)\geq 2$. Suppose $d(G)>2$. Now, $G$ is connected and $d(G)>2$ imply there exist $u_1,u_2\in V(G)$ such that $d_{G}(u_1,u_2)>2$. Since $G\in\mathcal{H}_2$, there will be two distinct induced paths between $u_1$ and $u_2$, which do not share any common vertex other than $u_1$ and $u_2$. The vertices other than $u_1,u_2$ belonging to any induced path between $u_1$ and $u_2$ are non-free. Thus, $\mathrm{iv}(G)>2$, and this gives a contradiction. Therefore, we have $\mathrm{d}(G)=2$. Also, note that for any non-complete graph $G\in\mathcal{H}_2$, we have $\kappa(G)=2$. Hence, by Theorems \ref{thmlb} and \ref{thmupperbound}, 
\begin{align*}
\mathrm{depth}(S/\mathcal{J}_{K_m,G}) &\geq (m-2)t+f(G)+d(G)\\
&= (m-2)+(n-2)+2\\
&= m+n-2\\
&= m+n-\kappa(G)\\
&\geq \mathrm{depth}(S/\mathcal{J}_{K_m,G}).
\end{align*}
Therefore, $\mathrm{depth}(S/\mathcal{J}_{K_{m},G})=m+n-2$. Now, let us assume that the result is true for all non-complete graphs $G' \in \mathcal{H}_2$ with $\mathrm{iv}(G')<k$. Let $G \in \mathcal{H}_2$ be a graph with $\mathrm{iv}(G)=k>0$. Let $v$ be a non-free vertex of $G$. Then, by Theorem \ref{thrm:2},
$$\mathcal{J}_{K_{m},G}=\mathcal{J}_{K_{m},G_{v}} \cap ((x_{iv}: i\in [m])+\mathcal{J}_{K_{m},G-v}).$$
Hence, we have the following exact sequence:
$$0 \longrightarrow S/\mathcal{J}_{K_{m},G} \longrightarrow S/\mathcal{J}_{K_{m},G_{v}} \oplus S_{v}/\mathcal{J}_{K_{m},G-v} \longrightarrow S_{v}/\mathcal{J}_{K_{m},G_{v}-v} \longrightarrow 0, $$ 
where $S_{v}= K[x_{ij}:i\in [m],j\in V(G-v)]$. Now, by Depth Lemma, 
\begin{equation*}
\mathrm{depth}(S/\mathcal{J}_{K_{m},G})\geq \mathrm{min} \{ \mathrm{depth}(S/\mathcal{J}_{K_{m},G_{v}}),\mathrm{depth}(S_{v}/\mathcal{J}_{K_{m},G-v}),
\end{equation*}
\begin{equation}\label{ineq5.1}
\quad\quad\quad\quad\quad\mathrm{depth}(S_{v}/\mathcal{J}_{K_{m},G_{v}-v})+1\}.
\end{equation}
Note that $G-v$ is a connected block graph such that no block contains more than two cut vertices. Then by Theorem \ref{thrm:1},
\begin{equation}\label{ineq5.2}
    \mathrm{depth}(S_v/\mathcal{J}_{K_m, G-v})= m+(n-1)-1=m+n-2.
\end{equation}

Now consider the graph $G_{v}$. If $G_{v}$ is a complete graph, then $\mathrm{depth}(S/\mathcal{J}_{K_{m},G_{v}})=m+n-1$ by Theorem \ref{thrm:1}. If $G_v$ is not a complete graph, then also $G_v\in\mathcal{H}_{2}$ by the choice of $\mathcal{H}_{2}$. Then due to Lemma \ref{lem:2}, we use induction hypothesis to get $\mathrm{depth}(S/\mathcal{J}_{K_{m},G_{v}})=m+n-2$. Thus, in any situation, we get
\begin{equation}\label{ineq5.3}
    \mathrm{depth}(S/\mathcal{J}_{K_m, G_v})\geq m+n-2.
\end{equation}
Similarly, if $G_{v}-v$ is complete, then $\mathrm{depth}(S/\mathcal{J}_{K_{m},G_{v}-v})=m+n-2$, and if $G_{v}-v$ is non-complete, then $\mathrm{depth}(S/\mathcal{J}_{K_{m},G_{v}-v})=m+n-3$.
Therefore,
\begin{equation}\label{ineq5.4}
    \mathrm{depth}(S/\mathcal{J}_{K_m, G_{v}-v})+1\geq m+n-2.
\end{equation}
From inequalities \eqref{ineq5.1}, \eqref{ineq5.2}, \eqref{ineq5.3}, and \eqref{ineq5.4}, it follows that
$$\mathrm{depth}(S/\mathcal{J}_{K_m, G})\geq m+n-2.$$
Again, $\kappa(G)=2$ and Theorem \ref{thmupperbound} together imply
$$\mathrm{depth}(S/\mathcal{J}_{K_m, G})\leq m+n-2.$$
Hence, the result follows.
\end{proof}

\begin{remark}\label{remub=lb+k}{\rm
    For every $m\geq 2$ and $k\in\mathbb{N}$, let us choose the graph $C_{2k}$. By Theorem \ref{thmdepthcycle}, $\mathrm{depth}(S/\mathcal{J}_{K_m,C_{2k}})=m+2k-2=m+n-\kappa(C_{2k})$. On the other hand, $(m-2)t+f(C_{2k})+d(C_{2k})=m+k-2$. Hence, for each $m\geq 2$ and $k\in\mathbb{N}$, there exists a graph $G$ such that $\mathrm{depth}(S/\mathcal{J}_{K_m,G})$ is equal the upper bound given in Theorem \ref{thmupperbound} and $k$ more than the lower bound given in Theorem \ref{thmlb}.
    }
\end{remark}

Let $\mathcal{H}_3$ be the collection of those generalized block graphs $G$ such that 
\begin{enumerate}[(a)]
    \item for any three distinct maximal cliques $F_i$, $F_j$, $F_k$ of $G$, we have $F_i\cap F_j\cap F_k=\emptyset$,
    \item all the maximal cliques of $G$ are $K_3$,
    \item we can order the maximal cliques as $F_1\cap F_2\simeq K_2$, $F_2\cap F_3\simeq K_1$, $F_3\cap F_4\simeq K_2$, and goes on.
\end{enumerate}
A graph belonging to $\mathcal{H}_3$ has been shown in \Cref{figh3}.

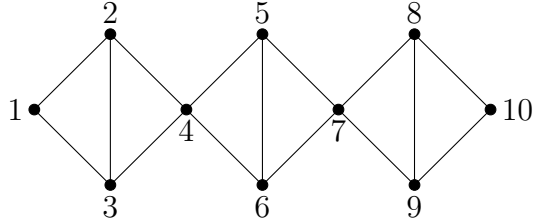
\begin{figure}[h]
\begin{tikzpicture}

\filldraw[black] (0,0) circle (2pt)node[anchor=east]{$1$};
\filldraw[black] (2,0) circle (2pt)node[anchor=north]{$4$};
\filldraw[black] (4,0) circle (2pt)node[anchor=north]{$7$};
\filldraw[black] (6,0) circle (2pt)node[anchor=west]{$10$};
\filldraw[black] (1,1) circle (2pt)node[anchor=south]{$2$};
\filldraw[black] (3,1) circle (2pt)node[anchor=south]{$5$};
\filldraw[black] (5,1) circle (2pt)node[anchor=south]{$8$};
\filldraw[black] (1,-1) circle (2pt)node[anchor=north]{$3$};
\filldraw[black] (3,-1) circle (2pt)node[anchor=north]{$6$};
\filldraw[black] (5,-1) circle (2pt)node[anchor=north]{$9$};

\draw[black] (0,0) -- (1,-1);
\draw[black] (0,0) -- (1,1);
\draw[black] (1,1) -- (1,-1);
\draw[black] (1,1) -- (2,0);
\draw[black] (1,-1) -- (2,0);
\draw[black] (2,0) -- (3,1);
\draw[black] (2,0) -- (3,-1);
\draw[black] (3,1) -- (3,-1);
\draw[black] (3,1) -- (4,0);
\draw[black] (3,-1) -- (4,0);
\draw[black] (4,0) -- (5,-1);
\draw[black] (4,0) -- (5,1);
\draw[black] (5,1) -- (5,-1);
\draw[black] (5,-1) -- (6,0);
\draw[black] (5,1) -- (6,0);
\end{tikzpicture}
\caption{The graph $G_3\in\mathcal{H}_{3}$ with $6$ maximal cliques.}
\label{figh3}
\end{figure}

\begin{figure}[h]
\begin{tikzpicture}

\filldraw[black] (0,0) circle (2pt)node[anchor=east]{$1$};
\filldraw[black] (2,0) circle (2pt)node[anchor=north]{$4$};
\filldraw[black] (1,1) circle (2pt)node[anchor=south]{$2$};
\filldraw[black] (1,-1) circle (2pt)node[anchor=north]{$3$};
\filldraw[black] (4,0) circle (2pt)node[anchor=north]{$5$};

\draw[black] (0,0) -- (1,-1);
\draw[black] (0,0) -- (1,1);
\draw[black] (1,1) -- (1,-1);
\draw[black] (1,1) -- (2,0);
\draw[black] (1,-1) -- (2,0);
\draw[black] (2,0) -- (4,0);
\end{tikzpicture}
\caption{The graph $G_1$ considered in the proof of Theorem \ref{thmd=lb=ub-k}.}
\label{figh4}
\end{figure}
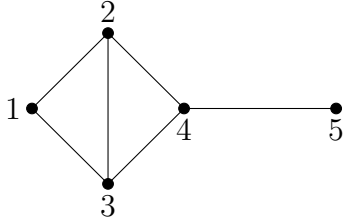

\begin{theorem}\label{thmd=lb=ub-k}
  For every $m\geq 2$ and $k\in\mathbb{N}$, there exists a connected graph $G_k$ such that 
  $$\mathrm{depth}(S/\mathcal{J}_{K_m,G_k})=(m-2)+f(G_k)+d(G_k)=m+n-\kappa(G_k)-k.$$  
\end{theorem}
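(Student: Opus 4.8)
The plan is to realize $G_k$ as a chain of triangles in $\mathcal{H}_3$ with a single pendant vertex adjoined, and to prove the two asserted equalities by pairing the general lower bound of Theorem~\ref{thmlb} with a matching upper bound --- one that is strictly sharper than Theorem~\ref{thmupperbound} --- obtained by induction on the number of non-free vertices. For $k\ge 1$ let $H_k$ be the $\mathcal{H}_3$-graph built from $2k$ triangles in which consecutive maximal cliques meet alternately in an edge and in a single vertex, and let $G_k$ be obtained from $H_k$ by attaching one pendant vertex to a free vertex of $H_k$; for $k=1$ this is precisely the graph of \Cref{figh4}. A direct inspection gives that $G_k$ is connected with $n=3k+2$ vertices, that its only free vertices are the pendant vertex and the other free vertex of $H_k$ (so $f(G_k)=2$), that $d(G_k)=\mathrm{diam}(G_k)=2k+1$, and that $\kappa(G_k)=1$ (every vertex at which two triangles of $H_k$ meet in a point, as well as the vertex carrying the pendant, is a cut vertex, and $G_k$ is non-complete). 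Hence
$$(m-2)+f(G_k)+d(G_k)=m+2k+1=\bigl(m+(3k+2)-1\bigr)-k=m+n-\kappa(G_k)-k,$$
so the theorem reduces to the single statement $\mathrm{depth}(S/\mathcal{J}_{K_m,G_k})=m+2k+1$.

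Since $G_k$ is connected, Theorem~\ref{thmlb} immediately yields $\mathrm{depth}(S/\mathcal{J}_{K_m,G_k})\ge (m-2)+f(G_k)+d(G_k)=m+2k+1$, so only the reverse inequality is in question. Here Theorem~\ref{thmupperbound} does not suffice: it gives merely $\mathrm{depth}(S/\mathcal{J}_{K_m,G_k})\le m+n-\kappa(G_k)=m+3k+1$, which overshoots the target by exactly $k$. Thus a genuinely recursive argument is required.

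For the upper bound I would induct on $\mathrm{iv}(G)$ inside a family $\mathcal{F}$ chosen large enough to be stable under the three operations $G\mapsto G-v$, $G\mapsto G_v$, $G\mapsto G_v-v$ and to contain all the $G_k$: concretely, $\mathcal{F}$ should consist of disjoint unions of isolated vertices, block graphs, graphs in $\mathcal{H}_1$, and ``generalized-block chains'' got from members of $\mathcal{H}_3$ by optionally adjoining a pendant vertex and/or completing an end triangle together with a pendant into a $K_4$, and for each member the goal is to prove $\mathrm{depth}(S/\mathcal{J}_{K_m,G})$ equals the Theorem~\ref{thmlb} lower bound. The base cases --- disjoint unions of cliques, block graphs, and graphs in $\mathcal{H}_1$ --- are covered by Theorems~\ref{thrm:1} and \ref{thmub=lb}. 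For the inductive step one picks a non-free vertex $v$ and feeds it into the short exact sequence of Theorem~\ref{thrm:2},
$$0 \longrightarrow S/\mathcal{J}_{K_{m},G} \longrightarrow S/\mathcal{J}_{K_{m},G_{v}} \oplus S_{v}/\mathcal{J}_{K_{m},G-v} \longrightarrow S_{v}/\mathcal{J}_{K_{m},G_{v}-v} \longrightarrow 0 ;$$
for $G=G_k$ the right choice is the vertex $v$ where the pendant is attached, and then $G_k-v=K_1\sqcup H'$ with $H'$ the bare chain of $2k-1$ triangles, $(G_k)_v$ is the $K_4$-headed chain obtained by completing the head triangle and pendant, and $(G_k)_v-v\cong H_k$; all three lie in $\mathcal{F}$ with strictly smaller $\mathrm{iv}$ by Lemma~\ref{lem:2}. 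Applying Theorem~\ref{thmlb} to the two summands of the middle module (whose lower bounds are $m+2k+1$ and $2m+2k$, both at least $m+2k+1$) and the induction hypothesis to $(G_k)_v-v\cong H_k$ (whose depth is then its lower bound $m+2k$), one sees that the middle module has depth $\ge m+2k+1$ while the rightmost module has depth exactly $m+2k$. Since the former strictly exceeds the latter, the long exact sequence in local cohomology attached to the short exact sequence forces $\mathrm{depth}(S/\mathcal{J}_{K_m,G_k})=\mathrm{depth}(S_v/\mathcal{J}_{K_m,(G_k)_v-v})+1=m+2k+1$; combined with the lower bound this proves the statement for $G_k$, the remaining members of $\mathcal{F}$ being treated the same way with their own choice of $v$.

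The delicate part --- and the main obstacle --- is the design and verification of the family $\mathcal{F}$: one must confirm that $\mathcal{F}$ really is closed under $G\mapsto G-v,\ G_v,\ G_v-v$ for a suitable non-free vertex $v$ of each member, and, crucially, that in each resulting short exact sequence the middle module has depth at least the Theorem~\ref{thmlb} lower bound of $G$ while $G_v-v$ has depth exactly one less, so that the Depth Lemma collapses the long exact sequence and the induction closes. This in turn requires careful bookkeeping of how $f$, $d$, $\kappa$, and the number of maximal cliques transform under the three operations; the well-foundedness of the induction is guaranteed by Lemma~\ref{lem:2}, which ensures that $\mathrm{iv}$ strictly drops at each step.
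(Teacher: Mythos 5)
Your reduction is sound: the graph $G_k$ (a chain of $2k$ triangles from $\mathcal{H}_3$ with a pendant attached to an end free vertex) does have $n=3k+2$, $f(G_k)=2$, $d(G_k)=2k+1$, $\kappa(G_k)=1$, the arithmetic identity $(m-2)+f(G_k)+d(G_k)=m+n-\kappa(G_k)-k=m+2k+1$ holds, the lower bound comes from Theorem~\ref{thmlb}, and the sharpened Depth Lemma step (if $\mathrm{depth}(B)>\mathrm{depth}(C)$ in $0\to A\to B\to C\to 0$ then $\mathrm{depth}(A)=\mathrm{depth}(C)+1$) is used correctly. But the whole argument hinges on knowing $\mathrm{depth}(S_v/\mathcal{J}_{K_m,H_k})=m+2k$ \emph{exactly}, and this you only ``obtain'' from an induction over a family $\mathcal{F}$ whose inductive step you never verify --- and, as stated, it cannot be verified, because $\mathcal{F}$ is not closed under the operations you would actually need. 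For the bare chain $H_k$, every non-free vertex $v$ lying on a shared edge of two triangles, or in the end blocks, yields a graph $(H_k)_v-v$ whose Theorem~\ref{thmlb} lower bound is $m+2k$, i.e.\ equal to the target, not one less; with such a $v$ the collapse $\mathrm{depth}(A)=\mathrm{depth}(C)+1$ could never produce the value $m+2k$ (it would force $m+2k+1$), so no upper bound is extracted. The only vertices for which $(H_k)_v-v$ has lower bound $m+2k-1$ are the internal cut vertices where two diamonds meet, and for those $(H_k)_v-v$ acquires an \emph{internal} $K_4$ block (and, iterating, $K_5$, etc.), which is outside your family of ``$\mathcal{H}_3$-chains with an optional pendant or end-$K_4$''. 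So the induction does not close; making it close means enlarging $\mathcal{F}$ to chain-shaped generalized block graphs with arbitrary complete blocks and redoing the bookkeeping of $f$, $d$ and exact depths there --- which is precisely the content of the known depth formula for generalized block graphs, not a routine verification.

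That formula is exactly what the paper uses to shortcut all of this: every graph in sight ($H_k$, your $G_k$, and the graphs produced by the three operations) is a generalized block graph, and the paper simply invokes \cite[Theorem 3.3]{ci20} to read off the depth (for $k\ge 2$ it takes $G_k\in\mathcal{H}_3$ itself, with $n=3k+1$ and depth $n+(m-1)-k=m+2k$; the pendant graph of \Cref{figh4} is used only for $k=1$, where the bare diamond has $\kappa=2$ and would not satisfy the identity). Your $G_k$ works just as well under that citation, since the same formula gives $\mathrm{depth}(S/\mathcal{J}_{K_m,G_k})=(3k+2)+(m-1)-k=m+2k+1$. The quickest repair of your argument is therefore to replace the inductive apparatus (or at least the unproved exact value for $H_k$) by an appeal to \cite[Theorem 3.3]{ci20}; as written, the key exact depth computation is assumed rather than proved, and that is a genuine gap.
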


\begin{proof}
For $k=1$, we choose the graph $G_1$ given in \Cref{figh4}. Then it is clear from the structure of $G_1$ that $n=5$, $f(G_1)=2$, $d(G_1)=3$, $\kappa(G_1)=1$. Note that $G_1$ is a generalized block graph, and thus, $\mathrm{depth}(S/\mathcal{J}_{K_m,G_1})=m+3$ by \cite[Theorem 3.3]{ci20}. In this case, $(m-2)+f(G_1)+d(G_1)=m+n-\kappa(G_1)-1=m+3$. Now, for $k\geq 2$, let us choose the graph $G_k\in\mathcal{H}_3$ such that $G_k$ has $2k$ maximal cliques. Then, the number of minimal cutsets of $G_k$ with cardinality $2$ is $k$. Note that the number of vertices of $G_k$ is $n=4k-(k-1)=3k+1$. Therefore, by \cite[Theorem 3.3]{ci20}, we get $\mathrm{depth}(S/\mathcal{J}_{K_m,G_k})=n+(m-1)-(2-1)k=2k+m$. Again, from the structure of $G_k$, we observe that the only free vertices of $G_k$ are $v_1$ and $v_2$, where $F_1\setminus F_2=\{v_1\}$ and $F_{2k}\setminus F_{2k-1}=\{v_2\}$, i.e. $f(G_k)=2$. Again, it is clear that $\mathrm{diam}(G_k)=d_{G_k}(v_1,v_2)=2k$, i.e. $d(G_k)=2k$. Thus, we have $(m-2)+f(G_k)+d(G_k)=2k+m=\mathrm{depth}(S/\mathcal{J}_{K_m,G_k})$. Now, due to $k>1$, we have $\kappa(G_k)=1$. Thus, $m+n-\kappa(G_k)-k=2k+m=\mathrm{depth}(S/\mathcal{J}_{K_m,G_k})$.
\end{proof}

To give a combinatorial description of Cohen-Macaulay binomial edge ideals, Bolognini et al. \cite{acc} introduced two combinatorial notions of graphs exploiting the cutsets of $G$ and unmixed property of $J_G$ (which is purely combinatorial): accessible graphs and strongly unmixed binomial edge ideals. Since the strongly unmixed property of binomial edge ideals is purely combinatorial, we will call strongly unmixed graphs instead of saying strongly unmixed binomial edge ideals. Let $G$ be a simple graph with $c$ connected components. Then $J_G$ is unmixed if and only if $c_{G}(T)=\vert T\vert +c$ for every $T\in\mathcal{C}(G)$. A non-empty cutset $T$ of $G$ is said to be \textit{accessible} if there exists $v\in T$ such that $T\setminus \{v\}\in \mathcal{C}(G)$. A graph $G$ is said to be \textit{accessible} if $J_G$ is unmixed and every non-empty cutset of $G$ is accessible. A graph $G$ is said to be \textit{strongly unmixed} if every connected component of $G$ is complete or if $J_G$ is unmixed and there exists a cut vertex $v$ of $G$ for which $G-v$, $G_v$, $G_{v}-v$ are strongly unmixed. The following has been proved in \cite{acc}:
$$G\text{ is strongly unmixed} \implies J_G\text{ is Cohen-Macaulay}\implies G\text{ is accessible}.$$
The authors conjectured on the converse of the above implications. The conjecture is proved for several classes: chordal and bipartite graphs \cite{acc}; chain of cycles with whiskers \cite{lmrr23}; and $r$-regular $r$-connected blocks with whiskers \cite{sswhisker22}. However, when $m\geq 3$, $\mathcal{J}_{K_m,G}$ is Cohen-Macaulay if and only if $G$ is a disjoint union of complete graphs (by \cite[Corollary 4.3]{acr23}. Thus, if $G$ is a strongly unmixed non-complete graph, then $\mathcal{J}_{K_m,G}$ is not Cohen-Macaulay for $m\geq 3$. But, we can still calculate the depth of generalized binomial edge ideals of strongly unmixed graphs, and that is independent of the field (see Theorem \ref{thmgensu}). If the above mentioned conjecture is true, then the strongly unmixed graphs are nothing but the class of accessible graphs.

\begin{theorem}\label{thmgensu}
    Let $G$ be a strongly unmixed connected graph on the vertex set $[n]$. Then $\mathrm{depth}(\mathcal{S}/\mathcal{J}_{K_m,G})=m+n-1$. Moreover, if $G$ is a strongly unmixed graph on $n$ vertices with $t$ connected components, then $\mathrm{depth}(\mathcal{S}/\mathcal{J}_{K_m,G})=(m-1)t+n$.
\end{theorem}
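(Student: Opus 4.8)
The plan is to induct on the number of non-free vertices $\mathrm{iv}(G)$, exactly mirroring the strongly unmixed recursion, and to keep track of the depth via the short exact sequence coming from \Cref{thrm:2}. First I would reduce the "moreover" part to the connected case: if $G$ has connected components $G_1,\dots,G_t$, then $\mathcal{J}_{K_m,G}$ is the sum of ideals in disjoint sets of variables, so $S/\mathcal{J}_{K_m,G}$ is a tensor product over $K$ of the $S_i/\mathcal{J}_{K_m,G_i}$, whence $\mathrm{depth}(S/\mathcal{J}_{K_m,G})=\sum_{i=1}^t \mathrm{depth}(S_i/\mathcal{J}_{K_m,G_i})$. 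Since each $G_i$ is strongly unmixed and connected (being a connected component of a strongly unmixed graph), the connected case gives $\sum_i (m+n_i-1) = (m-1)t + n$, as required.

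For the connected case, if $\mathrm{iv}(G)=0$ then $G$ is a complete graph $K_n$ and $\mathrm{depth}(S/\mathcal{J}_{K_m,K_n})=m+n-1$ by \Cref{thrm:1} (a block graph). Now suppose $\mathrm{iv}(G)>0$ and $G$ is strongly unmixed but not complete; since $G$ is connected and not complete, by definition of strongly unmixed there is a cut vertex $v$ such that $G-v$, $G_v$, and $G_v-v$ are all strongly unmixed. By \Cref{thrm:2} we have $\mathcal{J}_{K_m,G}=\mathcal{J}_{K_m,G_v}\cap\big((x_{iv}:i\in[m])+\mathcal{J}_{K_m,G-v}\big)$, and as in the proof of \Cref{thmd-comp} this yields the short exact sequence
$$0 \longrightarrow S/\mathcal{J}_{K_{m},G} \longrightarrow S/\mathcal{J}_{K_{m},G_{v}} \oplus S_{v}/\mathcal{J}_{K_{m},G-v} \longrightarrow S_{v}/\mathcal{J}_{K_{m},G_{v}-v} \longrightarrow 0,$$
where $S_v=K[x_{ij}:i\in[m],\,j\in V(G-v)]$. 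The lower bound then follows from the Depth Lemma once I control the three depths on the right.

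The remaining work is to evaluate those three depths, and this is where I expect the main subtlety. The graph $G_v$ has the same vertex set as $G$, is strongly unmixed, and has $\mathrm{iv}(G_v)<\mathrm{iv}(G)$ by \Cref{lem:2}; if $G_v$ is connected (which holds since $G$ is connected and adding edges preserves connectedness), induction gives $\mathrm{depth}(S/\mathcal{J}_{K_m,G_v})=m+n-1$. The graph $G-v$ is strongly unmixed on $n-1$ vertices but may be disconnected — here I would invoke the "moreover" part for fewer vertices (a secondary induction on $n$, or simply note the disconnected case reduces to the connected one with strictly fewer non-free vertices in each piece): if $G-v$ has $t'$ components then $\mathrm{depth}(S_v/\mathcal{J}_{K_m,G-v})=(m-1)t'+(n-1)$, which is $\geq m+n-2$ since $t'\geq 1$. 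Similarly $G_v-v$ is strongly unmixed on $n-1$ vertices with $\mathrm{iv}(G_v-v)<\mathrm{iv}(G)$, so $\mathrm{depth}(S_v/\mathcal{J}_{K_m,G_v-v})=(m-1)t''+(n-1)\geq m+n-2$, giving $\mathrm{depth}(S_v/\mathcal{J}_{K_m,G_v-v})+1\geq m+n-1$. Plugging into the Depth Lemma yields $\mathrm{depth}(S/\mathcal{J}_{K_m,G})\geq m+n-1$. For the reverse inequality, $G$ is connected non-complete and strongly unmixed, hence unmixed, so $c_G(\{v\})=2$ for the cut vertex $v$, forcing $\kappa(G)=1$; \Cref{thmupperbound} then gives $\mathrm{depth}(S/\mathcal{J}_{K_m,G})\leq m+n-1$. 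Combining the two inequalities finishes the connected case, and the "moreover" part follows by the tensor-product reduction above. The one point demanding care is the bookkeeping of the two nested inductions (on $\mathrm{iv}$ and on the number of components/vertices) so that every graph appearing on the right of the exact sequence is covered by a strictly smaller instance; I would organize this by inducting on the pair $(n,\mathrm{iv}(G))$ lexicographically, or equivalently by proving the connected and general statements simultaneously by induction on $n$ with an inner induction on $\mathrm{iv}$.
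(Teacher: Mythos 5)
Your overall strategy is exactly the paper's: induct on $\mathrm{iv}(G)$, use the decomposition of Theorem \ref{thrm:2} and the resulting short exact sequence, apply the Depth Lemma, and close with the upper bound of Theorem \ref{thmupperbound}; the reduction of the ``moreover'' part to the connected case via the tensor product decomposition is also how the paper implicitly handles disconnected graphs. However, as written there is one step that does not deliver the claimed conclusion: your estimate for the middle term of the exact sequence. You bound $\mathrm{depth}(S_v/\mathcal{J}_{K_m,G-v})=(m-1)t'+(n-1)\geq m+n-2$ using only $t'\geq 1$, and with that bound the Depth Lemma yields
$$\mathrm{depth}(S/\mathcal{J}_{K_m,G})\ \geq\ \min\{\,m+n-1,\ m+n-2,\ (m+n-2)+1\,\}\ =\ m+n-2,$$
which is strictly weaker than the needed inequality $\mathrm{depth}(S/\mathcal{J}_{K_m,G})\geq m+n-1$; so the sentence ``plugging into the Depth Lemma yields $\geq m+n-1$'' does not follow from your stated bounds.

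The gap is easily repaired, and this is precisely the point the paper exploits: $v$ is a \emph{cut vertex} of the connected graph $G$, so $G-v$ is disconnected and $t'\geq 2$ (in fact $t'=2$ exactly, since $J_G$ is unmixed and $\{v\}$ is a cutset, whence $c_G(\{v\})=2$). Then $(m-1)t'+(n-1)\geq 2(m-1)+(n-1)=2m+n-3\geq m+n-1$ because $m\geq 2$, and the Depth Lemma gives the correct lower bound. Note that this is the only place where the hypothesis $t'\geq 2$ and $m\geq 2$ are genuinely needed: your treatment of $G_v$ (connected, $\mathrm{iv}$ drops, induction gives $m+n-1$) and of $G_v-v$ (connected since the neighbours of $v$ form a clique in $G_v$, depth $m+n-2$, plus $1$) is fine, as is the upper bound via $\kappa(G)=1$ and Theorem \ref{thmupperbound}, and your lexicographic bookkeeping of the induction on $(n,\mathrm{iv}(G))$ is a legitimate way to organize what the paper does with induction on $\mathrm{iv}(G)$ alone.
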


\begin{proof}
    It is enough to consider that $G$ is connected. If $G$ is a complete graph, then the result holds from Theorem \ref{thrm:1}. Thus, let us assume $G$ is a non-complete graph. Then, by the definition of strongly unmixedness, there exists a cut vertex $v$ of $G$ such that $G-v$, $G_v$, and $G_{v}-v$ are strongly unmixed. Now, by Lemma \ref{lem:2}, $\mathrm{iv}(G-v)<\mathrm{iv}(G)$. Since $v$ is a cut vertex of $G$ and $J_G$ is unmixed, $G-v$ has exactly two connected components. Therefore, using the induction hypothesis, we get $\mathrm{depth}(S_v/\mathcal{J}_{K_m,G-v})=2(m-1)+n-1$. Also, by Lemma \ref{lem:2}, we have $\mathrm{iv}(G_v),\mathrm{iv}(G_v-v)<\mathrm{iv}(G)$. Since $G$ is connected, $G_v$ and $G_v-v$ both are connected. Hence, by induction hypothesis, we get $\mathrm{depth}(S/\mathcal{J}_{K_m,G_v})=m+n-1$ and $\mathrm{depth}(S_v/\mathcal{J}_{K_m,G_v-v})=m+n-2$. Thus, by Depth Lemma,
\begin{align*}
 \mathrm{depth}(S/\mathcal{J}_{K_{m},G}) &\geq \mathrm{min}\{
 \mathrm{depth}(S_{v}/\mathcal{J}_{K_{m},G-v}), \mathrm{depth}(S/\mathcal{J}_{K_{m},G_{v}}),\\ &\hspace{1.45cm}\mathrm{depth}(S_{v}/\mathcal{J}_{K_{m},G_{v}-v})+1\}\\
 &=\{2(m-1)+n-1,m+n-1,m+n-2+1\}\\
 &=m+n-1.
\end{align*}
Hence, it follows from Theorem \ref{thmupperbound} that $\mathrm{depth}(S/J_{K_m,G})=m+n-1$.
\end{proof}

\begin{figure}[h]
\begin{tikzpicture}

\filldraw[black] (0,0) circle (2pt)node[anchor=east]{$1$};
\filldraw[black] (1,1) circle (2pt)node[anchor=south]{$2$};
\filldraw[black] (1,-1) circle (2pt)node[anchor=north]{$3$};
\filldraw[black] (3,1) circle (2pt)node[anchor=south]{$5$};
\filldraw[black] (3,-1) circle (2pt)node[anchor=north]{$4$};
\filldraw[black] (5,1) circle (2pt)node[anchor=south]{$7$};
\filldraw[black] (5,-1) circle (2pt)node[anchor=north]{$6$};

\draw[black] (0,0) -- (1,1);
\draw[black] (0,0) -- (1,-1);
\draw[black] (1,1) -- (1,-1);
\draw[black] (1,1) -- (3,1);
\draw[black] (1,1) -- (3,-1);
\draw[black] (1,-1) -- (3,1);
\draw[black] (1,-1) -- (3,-1);
\draw[black] (3,1) -- (3,-1);
\draw[black] (3,1) -- (5,1);
\draw[black] (3,-1) -- (5,-1);

\end{tikzpicture}
\caption{A graph $G$ with $(m-2)t+f(G)+d(G)<\mathrm{depth}(S/\mathcal{J}_{K_m,G})<m+n-\kappa(G)$.}

\label{figh5}
\end{figure}

\begin{example}\label{exlb<d<ub}{\rm
    Till now, we have discussed several classes of graphs $G$ for which either $\mathrm{depth}(S/\mathcal{J}_{K_m,G})$ is equal to the lower bound given in Theorem \ref{thmlb} or the upper bound given in Theorem \ref{thmupperbound}. However, there exists some graph $G$ for which 
    $$(m-2)t+f(G)+d(G)<\mathrm{depth}(S/\mathcal{J}_{K_m,G})<m+n-\kappa(G).$$
    For instance, consider the graph shown in \Cref{figh5}. Then, $G$ is a generalized binomial edge ideal and using the formula given in \cite[Theorem 3.3]{ci20}, we get $\mathrm{depth}(S/\mathcal{J}_{K_m,G})=m+5$. Whereas, $(m-2)t+f(G)+d(G)=m+4$ and $m+n-\kappa(G)=m+6$.
    }
\end{example}

\section*{Acknowledgement}
Kamalesh Saha would like to thank the National Board for Higher Mathematics (India) for the financial support through the NBHM Postdoctoral Fellowship. Also, Kamalesh Saha was partially supported by an Infosys Foundation fellowship.

\printbibliography

\end{document}